\documentclass[a4paper]{amsart}
\usepackage{graphicx}
\usepackage{bbm, caption, color, enumitem, fourier}
\usepackage{pdflscape}
\usepackage{tikz}
\usetikzlibrary{calc, intersections, positioning}


\def\fdd{\stackrel{\textnormal{fdd}}{\Rightarrow}}

\def\d{\mathrm{d}}

\def\C{{\mathbb{C}}}

\def\E{{\mathbb{E}}}

\def\H{{\mathbb{H}}}

\def\N{{\mathbb{N}}}
\def\P{{\mathbb{P}}}

\def\R{{\mathbb{R}}}
\def\S{{\mathbb{S}}}

\def\Z{{\mathbb{Z}}}

\def\cF{{\mathcal F}}

\def\cM{{\mathcal M}}

\def\cP{{\mathcal P}}

\def\cZ{{\mathcal Z}}




\theoremstyle{plain}
\newtheorem{lemma}{Lemma}[section]
\newtheorem{theorem}[lemma]{Theorem}

\newtheorem{corollary}[lemma]{Corollary}

\newenvironment{assumption}[1]
{\innerassumption}
{\endinnerassumption}

\theoremstyle{remark}
\newtheorem{remark}[lemma]{Remark}


\title[Height and contour processes of Crump-Mode-Jagers forests (III)]{Height and contour processes of Crump-Mode-Jagers forests (III): The binary, homogeneous universality class} 

\author{Emmanuel Schertzer}
\address{Faculty of Mathematics, University of Vienna, Oskar-Morgenstern-Platz 1, 1090 Wien, Austria}
\email{emmanuel.schertzer@univie.ac.at}

\author{Florian Simatos}
\address{ISAE SUPAERO and Universit\'e de Toulouse\\10 avenue Edouard Belin\\31055 Toulouse Cedex 4\\France}
\email{florian.simatos@isae.fr}

\date{\today}

\numberwithin{equation}{section}



\begin{document}

\begin{abstract}
	This paper belongs to a series of papers aiming to investigate scaling limits of Crump--Mode--Jagers (CMJ) trees. In the previous two papers we identified general conditions under which CMJ trees belong to the universality class of Galton--Watson and Bellman--Harris processes. In this paper we identify general conditions for CMJ trees to belong to the universality class of binary, homogeneous CMJ trees. These conditions state that the offspring process should `look like' a renewal process, and also that it should not accumulate too many atoms near the origin. We show in particular that any renewal process satisfies these conditions
\end{abstract}

\maketitle

\section{Introduction}

This paper is the third paper in a series of papers~\cite{Schertzer18:0, Schertzer19:0} whose goal is to investigate scaling limits of Crump--Mode--Jagers (CMJ) trees, i.e., the chronological trees describing the genealogy of CMJ branching processes. A CMJ branching process is characterized by a pair $(V, \cP)$ with $V$ a positive random variable and $\cP$ a random, finite measure on $(0,\infty)$. Each individual $u$ in the population is endowed with an independent copy $(V_u, \cP_u)$ of $(V, \cP)$ such that:
\begin{itemize}
	\item atoms of $\cP_u$ represent the age of $u$ at childbearing, so that the mass $\lvert \cP_u \rvert$ of $\cP_u$ is the total number of children of $u$;
	\item $V_u$ represents the life-length of $u$ and satisfies $\cP_u((V_u, \infty)) = 0$, i.e., individuals produce their offspring during their life-time.
\end{itemize}

\subsection{Three classes}

Three important classes of CMJ processes are particularly relevant for the present discussion:
\begin{description}
	\item [Galton--Watson (GW) branching processes] they correspond to the case $V = 1$ and $\cP = \lvert \cP \rvert \delta_1$, i.e., all offspring occurs at time $1$ (here and throughout $\delta_x$ denotes the Dirac measure at $x \in \R$);
	\item [Bellman--Harris (BH) branching processes] they correspond to the case where $V$ and $\cP$ are independent and $\cP = \lvert \cP \rvert \delta_V$, i.e., offspring is independent of life length and occurs at death;
	\item [Binary, homogeneous branching processes] they correspond to the case where $\cP = \mu \mid _V$ with $\mu$ a Poisson process independent from $V$ (here and throughout, for a measure $\pi$ on $(0,\infty)$ and $v > 0$, $\pi \mid_v$ denotes the restriction of $\pi$ to $(0,v]$).
\end{description}

Scaling limits of the random chronological trees corresponding to these branching processes, as encoded by their height and contour processes (see in~\cite{Schertzer18:0, Schertzer19:0} and below for definitions), are well-known. The first results were obtained for GW processes with finite variance offspring distribution, whose scaling limit was shown to be the Brownian tree~\cite{Aldous93:0, Bennies00:0, Marckert03:0}. A striking feature in this case, the limiting Brownian excursion coding the tree and the depth-first exploration process (or Lukasiewicz path, see Remark~\ref{rk:SSC}) coincide: as will be seen, this is a feature that we will also encounter in the present paper. Later, the finite variance assumption was relaxed~\cite{Duquesne02:0, Duquesne03:0}, leading to a complete picture in the GW case: scaling limits belong to the class of L\'evy trees.

Seeing a BH process as a GW process where edges have been stretched by i.i.d.\ positive weights, it is natural to describe the scaling limits of BH processes in the same way starting from the L\'evy tree of the underlying genealogical GW process. Performing this construction on the limiting continuous object amounts to considering the so-called L\'evy snake, obtained from a L\'evy tree by adding Poissonian marks, and indeed it was shown that scaling limits of BH processes are described by L\'evy snakes~\cite{Janson05:0, Marckert03:1}.

Finally, the binary, homogeneous case exhibits the striking feature that, if properly defined, its contour process is a spectrally positive L\'evy process~\cite{Lambert10:0}. This observation has far-reaching consequences, but in particular, it immediately entails that the scaling limit of the chronological tree is a continuous tree coded by a spectrally positive L\'evy process.

\subsection{The GW and BH universality classes}

Our goal in this on-going project is to describe scaling limits of CMJ trees. In our earlier work we identified general conditions guaranteeing that CMJ processes belong to the universality class of GW or BH processes.

Namely, we showed in~\cite{Schertzer18:0} that CMJ with `short' edges belong to the GW universality class. Intuitively, this is not surprising since if edges are `short' in a well-defined sense, then it does not really matter when births occur precisely, and the chronological tree should look like the underlying genealogical tree. We identified in~\cite{Schertzer18:0} a precise `short edge' condition in the form of finite moments $\E(R), \E(V) < \infty$. Here $R$ represents the ``typical'' age of an individual at child-bearing, and its mean is simply given by $\E(R) = \E(\int_0^\infty u \cP(\d u))$.

In~\cite{Schertzer19:0} we showed a more subtle result: we allowed for edges to be long and considered the case where $\E(R) = \E(V) = \infty$, but we imposed that individuals who live for a long time do not have a large offspring. This is what happens in the BH case, and indeed we identified a general condition under which scaling limits of CMJ processes belong to the BH universality class. Moreover, we found the very surprising result (at least, to us) that any CMJ process with finite variance offspring distribution satisfies this condition, and thus belongs to the BH universality class.

We will also make use of general convergence results established in \cite{Schertzer18:0, Schertzer19:0}, which make it possible to reduce the joint convergence of the Lukasiewicz path, height and contour processes to the study of a multi-dimensional renewal process constructed from the ladder height process associated to the Lukasiewicz path , see the discussion following Theorem~\ref{thm:main}.

\subsection{The binary, homogeneous universality class}

Consider now the binary, homogenous case. Here the situation is opposite to the BH case: because individuals give birth at constant rate during their lifetime, individuals who live for a long time have a large offspring.

In the present paper we consider the case $\E(R) = \infty$, i.e., the ``typical'' age at child-bearing is large, but $\E(V) < \infty$, which is intermediate between the two-aforementioned cases (see Remark~\ref{rk:VR}). Our goal is to identify a large class of CMJ processes that have the same scaling limit as binary, homogeneous CMJ processes. Heuristically, we consider a case similar to the binary, homogeneous case where $\cP = \mu \mid_V$ with $\mu$ and $V$ independent, and we identify precise conditions for $\mu$ to `look like' a renewal process so that the CMJ process has the same scaling limit as the corresponding binary, homogeneous CMJ process.

In the present paper we only treat the non-triangular case. For the GW and BH universality classes, non-triangular results readily extend to the triangular case. However, here we found that the situation is more subtle and that we may have $\mu$ which `looks like' a renewal process and yet, the scaling limit be different from the scaling limit of the corresponding binary, homogeneous CMJ process.

\subsection{Organization of the paper} Section~\ref{sec:statement} introduces notation and states the main result of the paper, and Section~\ref{sec:models} applies this result to two cases. Section~\ref{sec:proof} presents the proof of the main result.

\section{Statement of main results} \label{sec:statement}

\subsection{General notation}

Let $\N$ the set of non-negative integers. For $x, y \geq 0$ let $[x] = \max\{n \in \Z: n \leq x\}$. For $d \in \N$ let $D(\R^d)$ be the set of c\`adl\`ag functions from $\R$ to $\R^d$, endowed with the Skorohod topology. For $f \in D(\R)$ we denote by $f(t-)$ its left-limit at $t \in \R_+$ and by $\Delta f(t) = f(t) - f(t-)$ the size of its jump.

We let $\cM$ be the set of positive Radon measures on $(0,\infty)$ and $\delta_x \in \cM$ for $x \geq 0$ be the Dirac measure at $x$. The mass of $\nu \in \cM$ will be denoted by $\lvert \nu \rvert = \nu((0,\infty))$, and $\nu \mid_x$ denotes the measure $\nu$ stopped at $x$: $\nu\mid_x(A) = \nu(A \cap [0,x])$ for any Borel set $A \subset (0,\infty)$. For convenience we will write $\nu(t) = \nu((0,t])$ for any $\nu \in \cM$ and $t \geq 0$, and thus make the usual identification between non-decreasing c\`adl\`ag functions starting at $0$ and positive measures on $\R_+$. We define $A_\nu(n)$ as the location of the $n$th atom of $\nu$:
\[ A_\nu(n) = \min \left\{ t \geq 0: \nu(t) \geq n \right\}, \ n \in \N, \]
with the convention $\min \emptyset = \infty$.

We will use $\Rightarrow$ to denote weak convergence, in the Skorohod topology when considering processes, and $\fdd$ to denote convergence of finite-dimensional distributions: for processes $X, X_p: \R \to \R^d$ we have $X_p \fdd X$ if and only if for every finite subset $I \subset \R_+$ we have $(X_p(t), t \in I) \Rightarrow (X(t), t \in I)$.

\subsection{CMJ process}

We consider a probability space $(\Omega, \cF, \P)$ rich enough to consider all necessary random variables. We consider a random variable $V > 0$ and a random measure $\mu \in \cM$ which are independent under $\P$. The process $A_\mu$ will often be encountered, and we will simply denote it by $A$, i.e., we define $A = A_\mu$. Note that $A(n) > 0$ for $n \geq 1$.

Throughout the paper, we consider the CMJ process with characteristic $(V, \cP)$ with $\cP = \mu \mid _V$, so that $\lvert \cP \rvert = \mu(V)$.

Letting $\cP_k$ denote the measure according to which the $k$th individual (ranked in lexicographic order) gives birth, we consider the Lukasiewicz path $S = (S(n), n \geq 0)$ with $S(0) = 0$ and
\[ S(k) = \sum_{k=0}^{n-1} \left( \lvert \cP_k \rvert - 1 \right), \ n \geq 1. \]

We finally consider $\H$ and $\C$ the chronological height and contour processes associated to the CMJ forest with characteristic $(V, \cP)$: $\H(n)$ denotes the chronological height, i.e., birth time, of the $n$th individual, while $\C(t)$ is the distance to the root at time $t$ of a particle traveling along edges of the chronological tree at unit speed, see~\cite{Schertzer18:0} for precise definitions and illustrations\footnote{In~\cite{Schertzer19:0} we studied a different chronological contour process: to avoid any ambiguity we stress that in the present paper, we consider the `classical' contour process as defined in~\cite{Schertzer18:0}.}.

\subsection{Assumptions on $(V, \mu)$}

We gather here various assumptions that will be enforced on $(V, \mu)$. Recall that $V$ and $\mu$ are assumed to be independent.

\begin{assumption}{\text{\textnormal{A}}} \label{ass-A}
	First, the usual criticality assumption:
	\begin{itemize}
		\item[\text{\textnormal{(C)}}] \label{ass-C} $\E(\mu(V)) = 1$.
	\end{itemize}
	Second, an assumption on $V$:
	\begin{itemize}
		\item[\text{\textnormal{(V)}}] \label{ass-V} there exists $\gamma \in (1,2)$ such that $V$ is in the domain of attraction of a $\gamma$-stable distribution.
	\end{itemize}
\end{assumption}

\noindent It is well-known that Assumption~V is equivalent the existence of $v_p \to 0$ such that $p \P(V \geq x / v_p) \to x^{-\gamma}$ for every $x > 0$, see for instance the proof of $\S$35, Theorem $2$ in~\cite{Gnedenko68:0}. Actually, this fact will be used several times in the sequel, without further notice. Moreover, $v_p$ is of the form $v_p = p^{-1/\gamma} \ell(p)$ for some slowly varying function~$\ell$ (see for instance~\cite[Section $8.3.2$]{Bingham89:0}). In the sequel, $v_p$ will always refer to such a sequence, provided that Assumption~V is enforced.

The next two conditions enforce that $\mu$ `looks like' a renewal measure, and so the corresponding height and contour processes will have the same scaling limit than that of a binary, homogeneous CMJ process.

\begin{assumption}{\text{\textnormal{A (continued)}}}
	Third, two assumptions on $\mu$:
	\begin{enumerate}
		\item[\text{\textnormal{(R1)}}] \label{ass-R1} there exists a constant $a \in (0,\infty)$ such that $\mu(t) / t \to 1/a$ as $t \to \infty$, where the convergence takes place in $L_1$;
		\item[\text{\textnormal{(R2)}}] \label{ass-R2} $p \P(v_p \mu(x/v_p) /x \geq 1/a') \to 0$ for some $a' < a$ and every $x > 0$.
	\end{enumerate}
\end{assumption}

Assumption R1 says that the number of atoms of $\mu$ grows linearly, which suggests that they are roughly evenly spaced as in a renewal process: it is therefore the assumption that states that $\mu$ looks like a renewal process. The interpretation of Assumption R2 is that $\mu$ cannot accumulate too many atoms near $0$. It may look technical at first, but we provide a simple example in Section~\ref{sub:counter-example} showing that our main result may not hold if something like that is not imposed. We will see in Section~\ref{sec:models} that R2 is automatically satisfied for a renewal process.

\subsection{Main result}

Let in the sequel
\[ S_p(t) = v_p S([pt]), \ t \geq 0, \]
with $v_p$ from Assumption~\ref{ass-A}. Let also $S_\infty$ be the spectrally positive, $\gamma$-stable L\'evy process with Laplace exponent
\begin{equation} \label{eq:psi}
	\psi(\lambda) = \frac{1}{- \Gamma(1-\gamma)} \left( \frac{\lambda}{a} \right)^\gamma, \ \lambda > 0,
\end{equation}
with $\Gamma$ the usual Gamma function. For $p \in [0,\infty]$ let $\underline S_p$ be the path $S_p$ reflected above its past infimum:
\begin{equation}\label{eq:luka-ref} \underline S_p(t) \ = \ S_p(t) - \inf_{u\leq t} S_p(u).\end{equation}
In addition to $S_p$, for finite $p$ we consider $\H_p(t) = v_p \H([pt])$ and $\C_p(t) = v_p \C(pt)$, i.e., we use the same scaling for the Lukasiewicz path and the chronological height and contour processes. Our goal is to prove the following result.

\begin{theorem} \label{thm:main}
	If Assumption~\ref{ass-A} holds, then
	\[ (\H_p, \C_p, \underline S_p) \fdd \ \left( a \underline S_\infty, a \underline S_\infty(\cdot/2\E(V)), \underline S_\infty \right). \]
\end{theorem}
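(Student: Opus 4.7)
The plan is to apply the general reduction from \cite{Schertzer18:0, Schertzer19:0}, which identifies a multi-dimensional renewal process---built from the ladder height structure of the (reverse) Lukasiewicz path together with the associated chronological contributions---whose finite-dimensional convergence implies the joint convergence of $(\H_p, \C_p, \underline{S}_p)$.

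First I would establish functional convergence of the Lukasiewicz path itself. The i.i.d.\ increments $\lvert \cP_k \rvert - 1 = \mu_k(V_k) - 1$ have zero mean by (C). Conditioning on $V$, the $L_1$ statement R1 gives $\mu(V)/V \to 1/a$ in probability as $V \to \infty$, so combined with (V) one obtains the tail asymptotic
\[
p \, \P\bigl(\mu(V) \geq x/v_p\bigr) \longrightarrow a^{-\gamma} x^{-\gamma}, \qquad x > 0,
\]
while R2 rules out an anomalous contribution from the regime where $V$ is small and $\mu$ accumulates mass near the origin. Classical convergence of centered triangular sums to a stable Lévy process then yields $S_p \Rightarrow S_\infty$ with the stated Laplace exponent $\psi$, and $\underline{S}_p \Rightarrow \underline{S}_\infty$ follows by continuity of the Skorohod reflection map.

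For the joint convergence with $\H_p$, the reduction from \cite{Schertzer18:0, Schertzer19:0} identifies the contribution of each ancestor to the chronological height with the age of the atom of $\mu \mid_V$ corresponding to the next child along the ancestral lineage. Under the size-biased sampling scheme attached to this ladder construction, the rank of that atom is conditionally uniform on $\{1, \dots, \mu(V)\}$, and R1 tells us that the position of the $K$th atom of $\mu$ is asymptotically $aK$. Summing along the lineage, the accumulated chronological height should therefore behave like $a$ times the total count of ``skipped'' offspring seen along the lineage, which is precisely $\underline{S}_p$; hence $\H_p \fdd a\underline{S}_\infty$. The contour convergence then follows by a standard time change: after visiting $n$ individuals, the contour clock is about $2\sum V_u \approx 2\E(V) n$ by the law of large numbers (which applies since $\E(V) < \infty$ follows from (V) with $\gamma > 1$), yielding $\C_p(t) \fdd a\underline{S}_\infty(t/2\E(V))$ jointly with the other two components.

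The principal technical obstacle is the identification of the multiplicative constant $a$ in the height limit, which requires controlling the size-biased sampling of $\mu$ uniformly along ancestral lineages of random length of order $v_p^{-1}$. The $L_1$ statement R1 must be promoted to a concentration estimate valid for the sum of lineage ages, and R2 must be used to exclude the pathological scenario in which short-lived ancestors nevertheless contribute disproportionate chronological mass through atoms of $\mu$ near zero---a scenario invisible to R1 alone. Managing both effects inside the multi-dimensional Skorohod framework of \cite{Schertzer18:0, Schertzer19:0} is where the substantive work lies, and is also precisely the reason the argument does not extend to the triangular regime, in which $\mu$ itself depends on $p$ and the $L_1$ convergence in R1 is too weak to survive the lineage-sampled analysis.
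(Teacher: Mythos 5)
Your proposal follows essentially the same route as the paper: reduce to the ladder-height renewal process of \cite{Schertzer18:0, Schertzer19:0}, derive the tail asymptotic $p\,\P(\mu(V)\geq x/v_p)\to (ax)^{-\gamma}$ from R1 with R2 ruling out the regime where $\mu$ accumulates atoms near the origin, identify the factor $a$ through size-biased sampling of a uniformly chosen atom of $\mu\mid_V$ along the spine, and obtain the contour by the $2\E(V)$ time change. The only difference is one of emphasis: the paper handles the ``substantive work'' you flag not by a concentration estimate for the lineage sum but by computing the joint tail of a single size-biased increment $(R(1),\cZ(1))$, after which the sum along the lineage converges for free by random-walk-to-L\'evy convergence evaluated at the local time $L_p(t)-$.
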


Let us briefly outline the strategy of the brief that builds on previous results established in \cite{Schertzer18:0,Schertzer19:0}.
First, we recall Theorem~$4.1$ in~\cite{Schertzer18:0} which, in the non-triangular case of the present paper, can be stated as follows.

\begin{theorem}[Theorem $4.1$ in~\cite{Schertzer18:0}]
	If the following conditions hold:
	\begin{itemize}
		\item $\E(V) < \infty$ and $\E(\mu(V)) = 1$;
		\item $S_p \Rightarrow S_\infty$;
		\item $\H_p \fdd \H_\infty$ for some process $\H_\infty$ which is (almost surely) continuous at $0$ and satisfies the condition $\P(\H_\infty(t) > 0)$ for every $t > 0$;
	\end{itemize}
	then $(\H_p, \C_p) \fdd (\H_\infty, \H_\infty(\, \cdot \,/(2\E(V)))).$
\end{theorem}

As the two first assumptions of this result hold under Assumption A (see Lemma~\ref{lemma:DA-mu(V)} for the convergence $S_p \Rightarrow S_\infty$), it follows from this result that in order to prove~\eqref{eq:conv-thm} it is enough to prove that $(\H_p, \underline S_p) \fdd \ (a \underline S_\infty, \underline S_\infty)$.
Secondly, we will prove the convergence of $(\H_p(t), \underline S_p(t))$ using an identity in law with a two-dimensional renewal process evaluated at a random time. 
See Theorem \ref{thm:recap-2} for more details.

To summarize, under the conditions stated above, the joint convergence of the Lukashiewicz, the height and the contour processes boil down to prove a scaling limit result on a two-dimensional renewal process.

\begin{remark} \label{rk:SSC}
	Our result states that the height and contour processes coincide asymptotically with the Lukasiewicz path up to a deterministic space-time change. This result is in the same spirit of \cite{Marckert03:0} where it was shown that for critical Galton-Watson planar trees with finite variance $\sigma^2$ conditioned on having $p$ vertices, 
	\[ \left( \H_p, \C_p, S_p \right) \Rightarrow \left( \frac{2}{\sigma} e_\infty, \frac{2}{\sigma} e_\infty(\cdot/2), e_\infty \right) \ \ \mbox{as $p\to\infty$}\]
	where $\H_p, \C_p, S_p$ are the properly scaled height, contour and Lukasiewicz path of the random tree, and $e_\infty$ is the standard Brownian excursion.
\end{remark}

\begin{remark}
	In contrast to Theorem~\ref{thm:main}, the convergence in the previous display holds in a functional sense, whereas our result only holds in the sense of finite-dimensional distributions. In~\cite{Schertzer18:0,Schertzer19:0} we constructed several examples where the chronological height process of a CMJ converges in the sense of finite-dimensional distributions but is not tight, which prevents functional convergence. However, we conjecture that under Assumption~\ref{ass-A}, there is actually functional convergence, and not only convergence of finite-dimensional distributions.
\end{remark}

\subsection{Example showing the necessity of Assumption~R2} \label{sub:counter-example}

As already explained, our goal is to generalize known results on the binary, homogeneous case. Intuitively, we expect a CMJ process where births occur `like' a renewal process to behave similarly: upon scaling, only long edges matter, and so if upon scaling the offspring measure $\cP$ looks like Lebesgue measure, the considered CMJ process should belong to the binary, homogeneous universality class. This is exactly the rational behind Assumption~R1.

However, the following example shows that Assumptions~V,~C and~R1 may be satisfied and yet, $S_p$ does not converge (and thus, the conclusion of Theorem~\ref{thm:main} fails). Thus, additional conditions are called upon, and
the following example shows that Assumption~R2 (which prevents accumulation of atoms near $0$) is sharp since if it is not satisfied then the conclusion may not hold.
\\

Let $\tau$ be an integer-valued random variable in the domain of attraction of a $\gamma'$-stable distribution with $1 < \gamma' < \gamma$: $\P(\tau \geq x) = \ell(x) / x^{\gamma'}$ for some slowly varying function $\ell$. Assuming that $\tau$ and $V$ are independent, we then consider
\[ \mu = \tau \delta_{1/2} + \sum_{k \geq 1} \delta_k. \]
We then have $\mu(t) = [t] + \tau 1_{t \geq 1/2}$ so that~R1 is satisfied.  Choose moreover $V$ independent from $\mu$ such that~V and~C are satisfied, in particular $V$ is in the domain of attraction of a $\gamma$-stable distribution. However, for every $a'<a$ and $x>0$
\[ \liminf_{p \to \infty} p \P(v_p \mu(x/v_p) /x \geq 1/a') \geq\liminf_{p \to \infty} p \P(v_p \tau /x \geq 1/a') = \infty \]
so that~R2 is not satisfied. (Here we use the fact that $\gamma' <\gamma$ so that the tail of $\tau$ is heavier than the tail of $V$.)

Let us now argue that Theorem \ref{thm:main} does not hold. We have $\mu(V) = [V] + \tau 1_{V \geq 1/2}$ and so $\mu(V)$ is roughly the sum of two independent random variables with tails $\propto x^{-\gamma}$ and $\propto x^{-\gamma'}$. In this case it is clear that the random variable with the heavier tail will dominate, which in our case is $\tau$ since $\gamma' < \gamma$. In particular, we have the approximation $\P(\mu(V) \geq n) \approx n^{-\gamma'}$. In this case, we know that $S([p])$ is of the order of $p^{1/{\gamma'}} \gg p^{1/\gamma}$, so that $\lvert S_p(1) \rvert \Rightarrow +\infty$.

\begin{remark}
	Beside providing a counter-example to Theorem~\ref{thm:main} when Assumption~R2 fails, we believe that this case is interesting in its own. We conjecture that in this case, the height and Lukasiewicz do not have the same scaling, and it would be interesting to study its scaling limit.
\end{remark}

%

%
%

\section{Two models} \label{sec:models}

For the sake of illustration, we propose two explicit models for which Assumptions R1 and R2 are also satisfied.
\\

\noindent {\bf Model 1 (renewal).} We first show that our results apply to any renewal process. More precisely, let $(\xi_n, n \geq 1)$ be non-negative i.i.d.\ random variables with finite mean and $\mu = \sum_{n=1}^\infty \delta_{\sum_{k=1}^n \xi_k}$ the renewal process with step distribution $\xi_1$ and no delay.

\begin{lemma}
	If $\mu$ is the aforementioned renewal process, then Assumptions~\textnormal{R1} and~\textnormal{R2} hold with $a = \E(\xi_1)$ and any $a' < a$.
\end{lemma}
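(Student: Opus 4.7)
The plan is to verify Assumptions~R1 and~R2 separately, relying on classical renewal theory for the former and on a one-sided Chernoff--Cram\'er bound exploiting non-negativity of the $\xi_n$ for the latter. Throughout, write $S_n = \sum_{k=1}^n \xi_k$ so that $A_\mu(n) = S_n$ and the key identity $\{\mu(t) \geq n\} = \{S_n \leq t\}$ holds.

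For Assumption~R1, I would combine the strong law of large numbers with the elementary renewal theorem. Inversion of $S_n/n \to a$ a.s.\ yields $\mu(t)/t \to 1/a$ almost surely, and Wald's identity applied to the first-passage time gives $\E(\mu(t))/t \to 1/a$. Since $\mu(t)/t \geq 0$, almost sure convergence together with convergence of the means gives $L_1$ convergence by Scheff\'e's lemma, establishing R1.

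For Assumption~R2, set $t_p = x/v_p$ and $n_p = \lceil t_p/a' \rceil$, so that the identity above gives
\[
	p\,\P\bigl(v_p \mu(x/v_p)/x \geq 1/a'\bigr) \ = \ p\,\P\bigl(S_{n_p} \leq t_p\bigr).
\]
Because $\xi_1 \geq 0$, the Laplace transform $M(\lambda) := \E(e^{-\lambda \xi_1})$ is well-defined and bounded by $1$ on $[0,\infty)$, and dominated convergence (using $\lvert e^{-\lambda \xi_1}-1\rvert/\lambda \leq \xi_1$) gives $M(\lambda) = 1 - a\lambda + o(\lambda)$ as $\lambda \downarrow 0$. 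In particular the map $\lambda \mapsto -\lambda a' - \log M(\lambda)$ has right-derivative $a - a' > 0$ at $0$, so one can fix $\lambda_0 > 0$ with $c := -\lambda_0 a' - \log M(\lambda_0) > 0$. Using $t_p \leq a'(n_p+1)$ and the standard one-sided Chernoff inequality,
\[
	\P(S_{n_p} \leq t_p) \ \leq \ e^{\lambda_0 t_p} M(\lambda_0)^{n_p} \ \leq \ e^{\lambda_0 a'}\, e^{-c n_p}.
\]

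It remains to verify that the polynomial prefactor $p$ does not defeat the exponential. Since $v_p = p^{-1/\gamma}\ell(p)$ with $\ell$ slowly varying and $\gamma \in (1,2)$, we have $n_p \sim x p^{1/\gamma}/(a'\,\ell(p))$, which grows faster than any power of $\log p$; consequently $p\,e^{-c n_p} \to 0$, finishing R2. I do not anticipate any serious obstacle: the only subtlety worth flagging is that non-negativity of $\xi_1$ alone keeps $M$ finite on all of $[0,\infty)$, so Cram\'er's argument produces a strictly positive exponential rate at every $a' < a$ without any extra moment assumption on $\xi_1$.
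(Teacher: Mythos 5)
Your proof is correct and follows essentially the same route as the paper: R1 via the strong law of large numbers, and R2 via a Chernoff bound on $\P(S_{n}\leq t)$ with a strictly negative exponential rate obtained from $a'<a=\E(\xi_1)$, concluded by the polynomial growth of $1/v_p$. Your only addition is to spell out the $L^1$ part of R1 (elementary renewal theorem plus Scheff\'e), which the paper leaves implicit; this is a welcome bit of extra care rather than a different argument.
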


\begin{proof}
	Assumption~R1 is a direct consequence of the strong law of large numbers, while~R2 comes from Markov inequality: for any $\theta > 0$,
	\[ p \P(v_p \mu(x/v_p) / x \geq 1 / a') = p \P( A(x / (a'v_p)) \leq x/v_p) \leq p e^{\theta / v_p + \log \E(e^{-\theta \xi}) / (v_p a')} \]
	and since $\E(\xi) = a > a'$, we can choose $\theta$ such that $\theta + \log \E(e^{-\theta \xi}) / a' = \varphi < 0$, so that
	\[ p \P(v_p \mu(x/v_p) / x \geq 1 / a') \leq p e^{-\varphi / v_p}. \]
	It is well known that $1/v_p \geq p^\delta$ for some $\delta > 0$ (see for instance~\cite[Proposition $1.3.6$]{Bingham89:0}), so that $p \P(v_p \mu(x/v_p) / x \geq 1 / a') \to 0$ as desired.
\end{proof}

\noindent {\bf Model 2 (switching Poisson).} We now consider a second model where $\mu$ is a Poisson process but where the intensity evolves randomly. The model we consider illustrates how Assumptions R1 and R2 can be checked, it could probably be generalized in various directions.

Let $(\sigma_n, n \geq 0)$ be a sequence of positive i.i.d.\ real-valued random variables with finite mean $m$ and $\Sigma_k = \sigma_1 + \cdots + \sigma_k$ for $k \geq 0$ with $\Sigma_0 = 0$. Let $L \subset (0,\infty)$ be a finite set and $\Lambda = (\Lambda_n, n \geq 0)$ an $L$-valued irreducible Markov chain with unique invariant distribution $\Lambda_\infty$.

The offspring process $\mu$ that we consider is defined as follows: conditional on $\Lambda$, for every $k \geq 1$, $\mu$ restricted on the interval
$[\Sigma_{k-1}, \Sigma_k)$ is an independent Poisson point process with intensity $\Lambda_{k-1}$.

\begin{lemma}
	If $\mu$ is the aforementioned measure, then Assumptions \textnormal{R1} and~\textnormal{R2} are satisfied with $a = 1/\E(\Lambda_\infty)$ and any $a' < 1/\max L$.
\end{lemma}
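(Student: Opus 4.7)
\medskip

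\noindent \textbf{Proof plan.} The plan is to establish R1 through a conditional LLN combined with the ergodic theorem for the chain $\Lambda$, and R2 by stochastic domination and a Chernoff estimate for Poisson random variables, essentially mimicking the argument in Model~1.

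For R1, I would decompose $\mu$ along the renewal times $\Sigma_n$ by setting $N_k = \mu([\Sigma_{k-1},\Sigma_k))$, so that conditional on $\Lambda$ and the $\sigma_k$'s, $N_k$ is Poisson with parameter $\Lambda_{k-1}\sigma_k$, the $N_k$'s being independent. Since $\sigma$ and $\Lambda$ are independent, $\E(N_k \mid \Lambda) = m\Lambda_{k-1}$, and the conditional variance is $\Lambda_{k-1}\sigma_k$. A second-moment computation shows that
\[ \frac{1}{n}\sum_{k=1}^n N_k - \frac{m}{n}\sum_{k=1}^n \Lambda_{k-1} \longrightarrow 0 \quad \text{a.s.}, \]
and the ergodic theorem for the irreducible chain $\Lambda$ on the finite set $L$ yields $\frac{1}{n}\sum_{k=1}^n \Lambda_{k-1} \to \E(\Lambda_\infty)$ a.s. Combined with $\Sigma_n/n \to m$ from the SLLN, this gives $\mu(\Sigma_n)/\Sigma_n \to \E(\Lambda_\infty)=1/a$ a.s. A standard sandwich between consecutive $\Sigma_n$'s extends this to $\mu(t)/t \to 1/a$ a.s. as $t \to \infty$. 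To upgrade to $L_1$, I would observe that since $L$ is finite, $\mu$ is stochastically dominated by a homogeneous Poisson process of rate $M := \max L$, hence $\mu(t)/t$ is bounded in $L_2$ uniformly in $t \geq 1$, which gives uniform integrability.

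For R2, the same stochastic domination yields
\[ \P\bigl(v_p \mu(x/v_p)/x \geq 1/a'\bigr) \ \leq \ \P\bigl(\mathrm{Pois}(Mx/v_p) \geq x/(a' v_p)\bigr). \]
Since $1/a' > M$, Cram\'er's large deviation bound for Poisson variables gives a constant $c=c(a',M,x)>0$ such that this probability is at most $e^{-c/v_p}$. Using once more that $1/v_p \geq p^\delta$ for some $\delta>0$ by~\cite[Proposition~$1.3.6$]{Bingham89:0}, we conclude that $p\,\P(v_p\mu(x/v_p)/x \geq 1/a') \to 0$ exactly as in the proof of Model~1.

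The main obstacle is the first step: one has to properly separate the randomness coming from the renewal structure $(\Sigma_n)$ from the randomness coming from the environment $\Lambda$ and from the Poisson sampling, and combine the SLLN with the ergodic theorem correctly. Once the a.s.\ limit along $\Sigma_n$ is in hand, everything else is routine.
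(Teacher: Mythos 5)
Your overall architecture matches the paper's: decompose $\mu$ along the renewal times $\Sigma_n$, use the law of large numbers for $\Sigma_n/n$ together with an ergodic-theorem argument for the per-block counts $N_k=\mu([\Sigma_{k-1},\Sigma_k))$ to get R1, and handle R2 by dominating $\mu$ by a homogeneous Poisson process of rate $\max L$ and applying a Chernoff bound plus $1/v_p\geq p^\delta$. The R2 part and the uniform-integrability upgrade to $L^1$ (via the second moment of the dominating Poisson process) are correct; the latter is in fact a cleaner justification of the $L^1$ convergence than the paper's, which simply asserts that the ergodic theorem delivers convergence in $L^1$ as well as almost surely.

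There is, however, one genuine gap in your R1 argument: the ``second-moment computation'' showing $\frac{1}{n}\sum_{k=1}^n N_k-\frac{m}{n}\sum_{k=1}^n\Lambda_{k-1}\to 0$ a.s. Conditionally on $\Lambda$ and the $\sigma_k$'s, $N_k$ is Poisson with parameter $\Lambda_{k-1}\sigma_k$, so
\[
\operatorname{Var}(N_k\mid\Lambda)=m\,\Lambda_{k-1}+\Lambda_{k-1}^2\operatorname{Var}(\sigma_k),
\]
and any variance-based (Kolmogorov-type) criterion for this centred sum requires $\E(\sigma_1^2)<\infty$. The model only assumes that $\sigma_1$ has finite mean, so this step fails as stated. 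The conclusion itself is still true, but it must be reached without second moments: either apply the ergodic theorem directly to the sequence $(N_k)_{k\geq 1}$, which is an ergodic functional of the chain $\Lambda$ with stationary law a mixed Poisson of random intensity $\sigma_0\Lambda_\infty$ and finite mean $m\,\E(\Lambda_\infty)$ (this is the paper's route), or group the indices $k$ according to the value of $\Lambda_{k-1}\in L$ (a finite set) and apply the ordinary strong law within each group, where the $N_k$ are conditionally i.i.d.\ with finite mean $m\ell$. With that repair, the rest of your plan (the sandwich between consecutive $\Sigma_n$'s, which needs only $\sigma_{N(t)}/t\to 0$, itself a consequence of $\Sigma_n/n\to m$) goes through.
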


\begin{proof}
	Let us first check Assumption R1. For $t \geq 0$ let $N(t) = \min\{n \geq 0: \Sigma_n \geq t\}$: then $t \leq N(t)$ and so
	\[ \frac{\mu(t)}{t} \leq \frac{1}{t} \mu \left( \Sigma_{N(t)} \right) = \frac{N(t)}{t} \times \frac{1}{N(t)} \sum_{k=1}^{N(t)} \mu \left( [\Sigma_{k-1}, \Sigma_k) \right), \]
	The strong law of large numbers implies $N(t) / t \to 1/m$ almost surely. Moreover, $(\mu([\Sigma_{k-1}, \Sigma_k)), k \geq 1)$ is by construction an ergodic Markov chain with ergodic measure a mixed Poisson process with random intensity $\sigma_0 \Lambda_\infty$, the two terms $\sigma_0$ and $\Lambda_\infty$ being independent in this product. Thus the ergodic theorem implies that
	\[ \frac{1}{N(t)} \sum_{k=1}^{N(t)} \mu \left( [\Sigma_{k-1}, \Sigma_k) \right) \to \E \left( \sigma_0 \Lambda_\infty \right) = m \E \left( \Lambda_\infty \right), \]
	the convergence taking place almost surely  and in $L^1$. This implies that $\mu(t) / t \to \E(\Lambda_\infty)$ in $L^1$.

	
	Let us now check Assumption R2. Consider $\mu^*$ a Poisson process with intensity $\ell^* := \max L$. We can couple $\mu$ and $\mu^*$ such that $\mu \leq \mu^*$, so that
	\[ \P \left( \frac{v_p}{x} \mu \left( \frac{x}{v_p} \right) \geq \frac{1}{a'} \right) \leq \P \left( \mu^* \left( \frac{x}{v_p} \right) \geq \frac{x}{a' v_p} \right). \]
	If $X$ is a Poisson random variable with parameter $\kappa$, then for $y > \kappa$ we have the classical large-deviation bound
	\[ \P(X \geq y) \leq \exp \left( -y \varphi \left( \frac{y}{\kappa} \right) \right) \ \text{ with } \varphi(u) = u-1-\log(u) \geq 0. \]
	Since $\mu^*$ is a Poisson process with intensity $\ell^*$, for $a' \ell^* < 1$ this gives
	\[ p \P \left( \frac{v_p}{x} \mu \left( \frac{x}{v_p} \right) \geq \frac{1}{a'} \right) \leq p \exp \left( -\frac{x}{a' v_p} \varphi \left( \frac{1}{a' \ell^*} \right) \right). \]
	Since $a' \ell^* < 1$, we have in particular $\varphi(1/(a' \ell^*)) \neq 0$. Moreover, as already mentioned above we have $1/v_p \geq p^\delta$ for some $\delta > 0$ which implies that the previous upper bound vanishes. This shows that Assumption R2 holds and concludes the proof.
\end{proof}

\section{Proof of Theorem \ref{thm:main}} \label{sec:proof}

We provide the proof of Theorem~\ref{thm:main} in this section: in particular, throughout this section we assume that Assumption A holds, even though this is not repeated in the statements of the forthcoming results.



\subsection{Preliminary results} \label{sub:preliminary}

%
%


\subsubsection{Tail behavior of $\mu(V)$} \label{sub:mu(V)}

Assumption~C is the usual criticality assumption, which makes the Lukasiewicz path a random walk with mean $0$. However, what we need is the genealogy to converge, and for that we need $\mu(V)$ to be in the domain of attraction of a stable law. We now show that under Assumption~A, the scaling of $S$ is governed by that of $V$.

\begin{lemma} \label{lemma:DA-mu(V)}
	If Assumption~\ref{ass-A} holds, then $p \P(\mu(V) \geq x / v_p) \to (ax)^{-\gamma}$ for every $x > 0$. In particular, $S_p \Rightarrow S_\infty$.
\end{lemma}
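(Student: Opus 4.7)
The heuristic is that by Assumption R1, $\mu(V) \approx V/a$ when $V$ is large, so the tail $\P(\mu(V) \geq x/v_p)$ should be asymptotic to $\P(V \geq ax/v_p)$, which by Assumption V contributes $(ax)^{-\gamma}/p$. I would prove matching asymptotic upper and lower bounds of $(ax)^{-\gamma}$ on $p\P(\mu(V) \geq x/v_p)$ by decomposing on the size of $V$ and exploiting the independence of $V$ and $\mu$. For the lower bound, restrict to $\{V \geq (a+\eps)x/v_p\}$: conditionally on $V = t$ in this range, $\{\mu(t) \geq x/v_p\} \supseteq \{\mu(t)/t \geq 1/(a+\eps)\}$, and R1 (which implies convergence in probability of $\mu(t)/t$ to $1/a$) makes the latter event have probability at least $1-\eta$ uniformly for $t$ large, hence uniformly on the range of interest for $p$ large. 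Assumption V then yields $\liminf_p p\P(\mu(V) \geq x/v_p) \geq (1-\eta)((a+\eps)x)^{-\gamma}$, and sending $\eta, \eps \downarrow 0$ gives the matching lower bound.

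The upper bound is more delicate because Assumption R2 provides a quantitative tail bound for only one specific $a' < a$, so R2 alone cannot push the decomposition up to a level arbitrarily close to $a$. Fixing such an $a'$ from R2 and some $\eps \in (0, a-a')$, I would split $\{\mu(V) \geq x/v_p\}$ into three pieces according to whether $V < a'x/v_p$, $V \in [a'x/v_p, (a-\eps)x/v_p)$, or $V \geq (a-\eps)x/v_p$. The first is bounded by $\P(\mu(a'x/v_p) \geq x/v_p) = \P(v_p \mu(a'x/v_p)/(a'x) \geq 1/a')$ via monotonicity of $\mu$, which is $o(1/p)$ by R2 applied at scale $a'x$. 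The third contributes the main term $((a-\eps)x)^{-\gamma}$ by Assumption V. For the middle piece, on $V = t < (a-\eps)x/v_p$ the event $\{\mu(t) \geq x/v_p\}$ forces $\mu(t)/t > 1/(a-\eps) > 1/a$, and R1 makes this probability at most $\eta$ once $t$ is large enough, giving a contribution bounded by $\eta \cdot p\P(V \geq a'x/v_p) = O(\eta)$. Letting $\eta \downarrow 0$ then $\eps \downarrow 0$ yields $\limsup \leq (ax)^{-\gamma}$.

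Finally, the convergence $S_p \Rightarrow S_\infty$ follows from the classical functional invariance principle for random walks: by Assumption C the step distribution $\mu(V)-1$ has mean zero, and the tail estimate just proved places it in the domain of attraction of a spectrally positive $\gamma$-stable law with normalizing sequence $v_p$; the Laplace exponent $\psi$ in \eqref{eq:psi} is precisely that of such a process with Lévy measure having tail constant $a^{-\gamma}$. The main obstacle is the intermediate range in the upper bound argument, where R2 (a quantitative one-sided deviation bound at a single level $a'$) must be combined with R1's qualitative convergence in probability in order to close the gap between $a'$ and $a$.
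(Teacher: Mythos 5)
Your argument is correct, and it reaches the same two pressure points as the paper's proof (Assumption R1 for the bulk, Assumption R2 for small values of $V$), but the technical route is different. The paper conditions on $\mu$ rather than on $V$: writing $\{\mu(V)\geq x/v_p\}=\{V\geq A(x/v_p)\}$ and using independence, it reduces the tail to $\E\bigl(p\overline F(X_p/v_p)\bigr)$ with $X_p=v_pA(x/v_p)$, then concludes in one stroke by the weak convergence $X_p\Rightarrow ax$ (a consequence of R1) together with dominated convergence on $\{X_p\geq a'x\}$, the complementary event being killed by R2 exactly as in your first zone. Your version instead conditions on $V$ and runs a three-zone sandwich; the price is the extra intermediate range $[a'x/v_p,(a-\eps)x/v_p)$, which you correctly identify as the delicate part and dispose of with the in-probability form of R1 plus the bound $p\P(V\geq a'x/v_p)=O(1)$ --- this is sound, and the order of limits ($p\to\infty$, then $\eta\downarrow0$, then $\eps\downarrow0$) is handled properly. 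What the paper's inversion buys is that the gap between $a'$ and $a$ never has to be closed by hand: the event $\{X_p\geq a'x\}$ has probability tending to one, so two zones suffice. For the second assertion, the paper verifies $S_p(1)\Rightarrow S_\infty(1)$ by an explicit Laplace-transform computation via the Tauberian theorem of Bingham--Goldie--Teugels (which is also how it pins down the constant in \eqref{eq:psi}), whereas you invoke the classical stable invariance principle for mean-zero walks with regularly varying right tail and bounded left tail; both are legitimate, yours being less self-contained about matching the normalization in \eqref{eq:psi} but otherwise standard.
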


\begin{proof}
	Let $\overline F(x) = \P(V \geq x)$, so that $v_p$ satisfies by definition $p \overline F(x v_p) \to x^{-\gamma}$.
	
	First a preliminary result: if $x_p \to x$, then $p \overline F(x_p / v_p) \to x^{-\gamma}$. Indeed, let $\varepsilon > 0$ and $p$ large enough so that $(1-\varepsilon) x \leq x_p \leq (1+\varepsilon) x$. Then by monotonicity of $\overline F$,
	\[ p \overline F((1+\varepsilon) x / v_p) \leq p \overline F(x_p / v_p) \leq p \overline F((1-\varepsilon) x / v_p) \]
	and so
	\[ \limsup_p p \overline F(x_p / v_p) \leq ((1-\varepsilon) x)^{-\gamma} \to x^{-\gamma} \]
	with the last limit obtained as $\varepsilon \downarrow 0$. A similar result is obtained for the $\liminf$, which proves the claim.
	
	Next, define $X_p = v_p A(x/v_p)$ and write
	\begin{align*}
		p \P(\mu(V) \geq x / v_p) & = \E \left( p \overline F(A(x / v_p)) \right)\\
		& = \E \left( p \overline F(X_p / v_p) \right)\\
		& = \E \left( p \overline F(X_p / v_p)); X_p \geq a' x \right) + \E \left( p \overline F(X_p / v_p) ; X_p < a' x \right).
	\end{align*}
	Since $X_p \Rightarrow ax$ by Assumption~R1 and $a' < a$, we have
	\[ p \overline F(X_p / v_p) 1(X_p \geq a' x) \Rightarrow (ax)^{-\gamma} \]
	by the preliminary result, and since
	\[ p \overline F(X_p / v_p) 1(X_p \geq a' x) \leq p \overline F(a' x / v_p / 2) \to (a'x)^{-\gamma}, \]
	we get by domination
	\[ \E \left( p \overline F(X_p / v_p)); X_p \geq a' x \right) \to (a x)^{-\gamma}. \]
	As for the second term, we have
	\begin{align*}
		\E \left( p \overline F(X_p / v_p) ; X_p < a' x \right) & \leq p \P(X_p \leq a' x/2)\\
		& = p \P(A(x / v_p) \leq a' x / v_p)\\
		& = p \P(x / v_p \leq \mu(a' x / v_p))
	\end{align*}
	which is assumed to vanish by Assumption~R2.
	
	This proves that $p \P(\mu(V) \geq x / v_p) \to (ax)^{-\gamma}$. In order to prove convergence of the rescaled random walk $S_p$, it is sufficient to prove convergence of $S_p(1)$. We compute its Laplace transform and show that it is given by~\eqref{eq:psi}. We have just proved that $\mu(V)$ is in the domain of attraction of a $\gamma$-stable distribution, and so there exists $\ell$ slowly varying with $\P(\mu(V) \geq x) = \ell(x) / x^\gamma$. In particular, the result that we have just proved says that $p v_p^\gamma \ell(1/v_p) \to a^{-\gamma}$. We then have $\P(\mu(V) - 1 \geq x) = \ell(x+1) / (x+1)^\gamma$: since $\ell(x+1) \sim \ell(x)$ because $\ell$ is slowly varying (as a consequence of the uniform convergence theorem~\cite[Theorem 1.2.1]{Bingham89:0}), Theorem 8.1.6 in~\cite{Bingham89:0} gives
	\[ \E(e^{-s(\mu(V)-1)}) = 1 + s^\gamma \ell'(1/s) + o(s^\gamma \ell'(1/s)) \]
	with $\ell'(x) = \ell(x)/(-\Gamma(1-\gamma))$, as $s \downarrow 0$, and so
	\begin{align*}
		\E(e^{-\lambda S_p(1)}) & = \E(e^{-\lambda v_p (\mu(V)-1)})^p\\
		& = \exp \left( p \log(1 + \lambda^\gamma v_p^\gamma \ell'(1/(\lambda v_p)) + o(v^\gamma_p \ell'(1/v_p))) \right).
	\end{align*}
	We have
	\[ p v^\gamma_p \ell'(1/v_p) = \frac{p v_p^\gamma \ell(1/v_p)}{- \Gamma(1-\gamma)} \to \frac{1}{-a^\gamma \Gamma(1-\gamma)} \]
	so that
	\[ \E(e^{-\lambda S_p(1)}) = \E(e^{-\lambda v_p (\mu(V)-1)})^p \to \exp \left( \lambda^\gamma / (-a^\gamma \Gamma(1-\gamma)) \right) \]
	which gives the result.
\end{proof}

\subsubsection{Ladder height process and genealogical results} \label{sub:genealogical-results}

Let $(T, \cZ)$ be the (ascending) ladder process associated to $S$. The ladder time process $T = (T(n), n \geq 0)$ is defined by $T(0)=0$ and for $k \geq 0$,
\[ T(k+1) = \inf \big\{ \ell > T(k): S(\ell) \geq S(T(k)) \big\}, \]
with the convention $T(k+1) = \infty$ if $T(k) = \infty$, and the ladder height process $\cZ = (\cZ(n), n \geq 0)$ is defined by $\cZ(n) = S(T(n))$. We scale the processes $T$ and $\cZ$ in the following way:
\[ \cZ_p(t) = v_p \cZ([p v_p t]) \ \text{ and } \ T_p(t) = \frac{1}{p} T([p v_p t]). \]
Recall that $S_p \Rightarrow S_\infty$, and let also $(T_\infty, \cZ_\infty)$ be the ladder process associated to the L\'evy process $S_\infty$ introduced earlier. According to~\cite[Theorem VII.4]{Bertoin96:0}, the Laplace exponent of $\cZ_\infty$ is given by $\psi_Z(\lambda) = c \lambda^{\gamma-1}$ for some constant $c > 0$.

\begin{lemma} \label{lemma:S-T-Z}
	As $p \to \infty$ we have $\underline S_p \Rightarrow \underline S_\infty$ and $(T_p, \cZ_p) \Rightarrow (T_\infty, \cZ_\infty)$.
\end{lemma}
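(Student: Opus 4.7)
The plan is to derive both convergences from Lemma~\ref{lemma:DA-mu(V)}, which already supplies the Skorohod-topology convergence $S_p \Rightarrow S_\infty$.

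First, I would obtain $\underline S_p \Rightarrow \underline S_\infty$ by the continuous mapping theorem. The reflection functional
\[
\Phi : D(\R_+) \to D(\R_+), \qquad \Phi(x)(t) = x(t) - \inf_{0 \leq s \leq t} x(s),
\]
is continuous in the Skorohod $J_1$ topology at c\`adl\`ag paths that are continuous at $0$: the running-infimum functional is itself continuous in $J_1$, being invariant under the monotone time-changes used in the Skorohod metric, and the subtraction is Lipschitz. Since almost every sample path of $S_\infty$ lies in this continuity set, one concludes $\underline S_p = \Phi(S_p) \Rightarrow \Phi(S_\infty) = \underline S_\infty$.

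For the second convergence, I would invoke the classical principle that weak convergence of a random walk to a L\'evy process implies joint weak convergence of the corresponding ascending ladder-epoch and ladder-height processes, suitably renormalized, to the ladder processes of the limit. In the stable-domain setting treated here this is standard fluctuation theory; see for instance~\cite{Bertoin96:0}. The scaling index $pv_p$ in the definitions of $T_p$ and $\cZ_p$ is pinned down by self-similarity: since $\cZ_\infty$ is a $(\gamma-1)$-stable subordinator, the discrete ladder heights $\cZ(n)$ are of order $n^{1/(\gamma-1)}$, so $v_p \cZ([pv_p t])$ is of order unity precisely when $pv_p^{\gamma} = \Theta(1)$, consistent with $v_p = p^{-1/\gamma} \ell(p)$; analogously one checks that $T([pv_p t])/p$ has a nondegenerate limit.

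The principal obstacle is that the ladder functional is \emph{not} itself continuous on path space: arbitrarily small $J_1$ perturbations of $S$ can create or destroy ladder epochs, so the continuous mapping theorem cannot be applied directly to $(T, \cZ)$. The standard workaround is to realize $(T_p, \cZ_p)$ as a right-continuous inverse of the running-supremum process $\overline S_p(t) = \sup_{u \leq t} S_p(u)$, which \emph{is} continuous in $J_1$. Because $S_\infty$ is spectrally positive and stable, its supremum process is strictly increasing on the set of its increase points almost surely, so inversion is continuous at $\overline S_\infty$ almost surely; a second application of the continuous mapping theorem then delivers $(T_p, \cZ_p) \Rightarrow (T_\infty, \cZ_\infty)$. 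Carefully verifying that this inversion is compatible with the three scalings $(v_p, pv_p, p)$ attached to heights, ladder indices, and epochs is the delicate bookkeeping step.
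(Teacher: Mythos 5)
For the first convergence your argument coincides with the paper's: the authors simply invoke the continuity of the one-sided reflection map in the Skorohod $J_1$ topology (Whitt, Theorem 13.5.1) and apply continuous mapping to $S_p \Rightarrow S_\infty$, which is what you do. (Your restriction to paths continuous at $0$ is unnecessary -- the reflection map is continuous on all of $D(\R)$ -- and here the running infimum of $S_\infty$ is even continuous since $S_\infty$ is spectrally positive, so the subtraction causes no $J_1$ trouble. This half is fine.)

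For the second convergence there is a genuine gap. The paper does not reprove this; it cites Proposition 3.1 of \cite{Schertzer19:0}, which is a non-trivial invariance principle for ladder variables, and your proposed replacement does not deliver it. The device you suggest -- taking the right-continuous inverse of the running supremum -- produces the first-passage process $x \mapsto \inf\{t : \sup_{u \leq t} S_p(u) > x\}$, i.e.\ the time component \emph{as a function of the level}, which is (a version of) $T \circ \cZ^{-1}$. It does not recover the pair $(T_p,\cZ_p)$ as a bivariate process indexed by the ladder count: to pass from the level parametrization back to the index parametrization you must normalize the local time of $S_\infty$ at its supremum and show that the discrete count $[p v_p t]$ of ladder epochs matches that normalization, with the correct multiplicative constant. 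That matching (equivalently, identifying the constant $c$ in $\psi_Z(\lambda) = c\lambda^{\gamma-1}$ consistently with the scaling $v_p \cZ([pv_p t])$ and $T([pv_p t])/p$) is precisely the content of the cited proposition and is established there by analyzing the bivariate renewal process $(T(1),\cZ(1))$ directly; it is not obtainable from the continuous mapping theorem applied to the supremum, since the ladder functional is, as you yourself note, discontinuous on path space. Your paragraph acknowledges this as ``delicate bookkeeping'' but that bookkeeping is the actual theorem; as written, the second half of your proof is a plan rather than a proof, and its key step rests on an identification (inverse of the supremum $=$ ladder process) that is not correct.
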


\begin{proof}
	The first convergence comes from the continuity of the Skorohod reflection map~\cite[Theorem $13.5.1$]{Whitt02:0}, and the second convergence was proved in~\cite[Proposition $3.1$]{Schertzer19:0} to be a consequence of $S_p \Rightarrow S_\infty$.
\end{proof}

We need to consider a third renewal process introduced in~\cite{Schertzer18:0}. Recall that $\cP_k$ is the offspring measure of the $k$th individual and that $A_\nu(n)$ is the location of the $n$th atom of $\nu$. We define the renewal process $R$ with increments $A_{\cP_{T(k)-1}}(\Delta \cZ(k))$ where $\Delta \cZ(k) = \cZ(k)- \cZ(k-1)$, i.e.,
\[ R(n) = \sum_{k = 1}^n A_{\cP_{T(k)-1}}(\Delta \cZ(k)), \]
and scale it in the following way:
\[ R_p(t) = v_p R([p v_p t]), \ t \geq 0. \]
For $p \in \N \cup \{\infty\}$ let $\overline S_p(t) = \sup_{[0,t]} S_p - S_p(t)$ be the process $S_p$ reflected above its past supremum and $L_p$ its local time process at $0$, which is the right-continuous inverse of $T_p$:
\[ L_p(t) = \inf \left\{ s \geq 0: T_p(s) > t \right\}. \]

The previous definitions are motivated by the following result from one of our previous works, which expresses an important equality in distribution.

\begin{theorem}[Theorem 2.2 in \cite{Schertzer19:0}.]\label{thm:recap-2}
There exists some sequence $\rho_p \to \infty$ such that for each $t \geq 0$ we have\footnote{Some help for the thorough reader: if $L$ is defined as the local time at $0$ for the Lukasiewicz path reflected at its past maximum, then $\widetilde T^{-1}$ in~\cite{Schertzer19:0} is equal to $L-1$.} 
\[ (\H_p(t), \underline S_p(t)) = (R_{\rho_p}, \cZ_{\rho_p}) \big( L_{\rho_p}(t) - \big) \ \ \mbox{in law}. \]
where $\underline S$ is path $S$ reflected above its past infimum as defined in (\ref{eq:luka-ref}). 
\end{theorem}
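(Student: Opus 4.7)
The plan is to decode both sides of the asserted identity as deterministic functionals of the same i.i.d.\ input $(\cP_k,V_k)_{k\geq 1}$, and then exploit a path-reversal to identify their distributions.

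First I would spell out the pathwise reconstruction of the ancestral line from the Lukasiewicz path. For the $n$-th individual $u$ in lexicographic order, the indices of the ancestors of $u$ are precisely the strict descending ladder indices of $S$ viewed backwards from $n$: $k$ is an ancestor index iff $S(k-1) < \min_{k \le j \le n} S(j)$. For each such $k$, the child of $u_k$ that lies on the line to $u$ is the $j$-th atom of $\cP_{k-1}$, with $j = S(k-1) - \inf_{k-1 \le i \le n} S(i) + 1$, so the age at which this birth occurs is $A_{\cP_{k-1}}(j)$. Summing over ancestors expresses $\H(n)$ as an explicit functional of the descending-ladder structure of $S$ viewed from $n$, while $\underline S(n) = S(n) - \inf_{[0,n]} S$ is the corresponding cumulative overshoot.

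Next I would apply the time-reversal $\tilde\cP^{(n)}_k := \cP_{n-k+1}$, which induces the reversed walk $\tilde S^{(n)}(k) := S(n) - S(n-k)$. Since the input sequence $(\cP_k,V_k)$ is i.i.d., the reversed data is equidistributed with the original. Under this reversal, strict descending ladder epochs of $S$ viewed from $n$ become strict ascending ladder epochs of $\tilde S^{(n)}$ viewed from $0$. Consequently: the genealogical height of $u$ equals the discrete local time $L(n)$ of $\tilde S^{(n)}$ reflected above its maximum; $\underline S(n)$ equals $\sup_{[0,n]} \tilde S^{(n)}$, i.e.\ the ladder-height value $\cZ(L(n){-})$ of $\tilde S^{(n)}$; and the age sum defining $\H(n)$ becomes exactly $R(L(n){-})$ in the notation of the statement, with summands $A_{\tilde\cP^{(n)}_{T(k)-1}}(\Delta\cZ(k))$. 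Because the reversed data has the same law as the original, dropping the tildes yields the raw distributional identity
\[
\bigl(\H(n),\underline S(n)\bigr) \stackrel{d}{=} \bigl(R,\cZ\bigr)\bigl(L(n){-}\bigr).
\]

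To obtain the rescaled form at a fixed $t$, I would apply the identity at $n = [pt]$ and rewrite the right-hand side in terms of $R_{\rho_p}, \cZ_{\rho_p}, L_{\rho_p}$. The scalings on the right use $v_{\rho_p}$ at time argument $[\rho_p v_{\rho_p} s]$, while the target uses $v_p$ at $L([pt])$; matching forces $\rho_p v_{\rho_p} \sim p v_p$ up to slow variation, which pins down an admissible sequence $\rho_p \to \infty$ via the regular variation of $v_\cdot$. The main obstacle I anticipate is the careful bookkeeping in the first step: since $\Delta S \in \{-1,0,1,2,\dots\}$, the walk has plateaus (when $\lvert\cP_k\rvert=1$) and unit down-jumps (when $\lvert\cP_k\rvert=0$), and the strict/weak ladder conventions must be tuned so that overshoots $\Delta\cZ$ correspond bijectively to atom indices inside $\cP_{k-1}$. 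Once this pathwise correspondence is nailed down, the i.i.d.\ reversal is immediate and the scale matching is routine.
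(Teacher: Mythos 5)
This statement is not proved in the paper at all: it is recalled verbatim from Theorem~2.2 of the second paper in the series, so there is no in-text proof to compare against. Your reconstruction nevertheless follows exactly the route taken there: the spine decomposition expressing $\H(n)$ as a sum, over the ancestors of $n$, of the ages $A_{\cP_k}(j_k)$ at which the next spine individual is born, the identification of the ancestors and the ranks $j_k$ with the (weak) ascending ladder structure of the time-reversed Lukasiewicz path, and the i.i.d.\ duality turning this into the identity in law with $(R,\cZ)$ evaluated at the ladder count. So in substance your approach is the intended one, and the bookkeeping issues you flag (strict versus weak ladder epochs, plateaus when $\lvert\cP_k\rvert=1$, the off-by-one hidden in the footnote's relation $\widetilde T^{-1}=L-1$) are precisely the points that the cited proof has to handle.

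The one step I would push back on is the scale matching. You propose to determine $\rho_p$ by requiring $\rho_p v_{\rho_p}\sim p v_p$ ``up to slow variation,'' but the assertion is an \emph{exact} equality in law for each fixed $p$, so the deterministic normalizations on the two sides must coincide exactly, not merely asymptotically: the left side carries the prefactor $v_p$ and the right side carries $v_{\rho_p}$, and an asymptotic equivalence of these constants would only yield the identity up to a multiplicative error tending to $1$. With the normalizations used in this paper ($R_{\rho}(s)=v_{\rho}R([\rho v_{\rho}s])$, $T_{\rho}(s)=\rho^{-1}T([\rho v_{\rho}s])$, $L_\rho$ the right-continuous inverse of $T_\rho$), the unscaled identity $(\H(n),\underline S(n))\stackrel{d}{=}(R,\cZ)$ evaluated at the ladder count of $n$ rescales cleanly with the choice $\rho_p=p$, the factors $v_{\rho_p}$ and the time arguments then matching identically; the freedom in the statement (``there exists some sequence $\rho_p\to\infty$'') is there only to absorb the discrepancies between the conventions of the present paper and those of the cited one. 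This is a minor repair, not a gap in the idea, but as written the last step would not deliver an exact equality in law.
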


The latter result entails that the convergence of the height process boils  down to the the convergence of the renewal processes introduced above.
This is the content of the up-coming sections.

\subsubsection{Two lemmas}

In the sequel we let $V_x$ denote a random variable distributed as $V/x$ conditioned on $V \geq x$, and we also consider the function $f(t) = \E(\mu(t) / t)$ for $t > 0$. The following two simple lemmas will be crucial for the next arguments.

\begin{lemma} \label{lemma:V_x}
	As $x\to\infty$ we have $V_x \Longrightarrow V_\infty$ in $L^1$.
\end{lemma}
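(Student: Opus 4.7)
The limiting law $V_\infty$ should be the Pareto distribution $\P(V_\infty \geq t) = t^{-\gamma}$ for $t \geq 1$ (and $V_\infty \geq 1$ almost surely), which is the classical scaling limit of a random variable in the domain of attraction of a $\gamma$-stable law conditioned on being large. My plan is to first establish weak convergence and then upgrade to $L^1$ convergence by showing convergence of means.

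First I would use Assumption~V, which under the equivalent formulation gives $\P(V \geq x) = \ell(x)/x^\gamma$ for some slowly varying function $\ell$. Then for $t \geq 1$ one computes directly
\[ \P(V_x \geq t) \;=\; \P(V \geq xt \mid V \geq x) \;=\; \frac{\ell(xt)}{\ell(x)} \cdot t^{-\gamma}, \]
and for $t < 1$ the probability equals $1$. Since $\ell$ is slowly varying, $\ell(xt)/\ell(x) \to 1$ as $x \to \infty$ for each fixed $t > 0$, which gives $\P(V_x \geq t) \to \P(V_\infty \geq t)$ at every continuity point, hence $V_x \Rightarrow V_\infty$.

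Next I would upgrade weak convergence to $L^1$ convergence. Since $V_x, V_\infty \geq 0$ and $\E(V_\infty) = \gamma/(\gamma-1) < \infty$ (here we crucially use $\gamma > 1$), it suffices to prove $\E(V_x) \to \E(V_\infty)$. Writing
\[ \E(V_x) \;=\; \frac{\E(V\,\mathbbm{1}_{V \geq x})}{x\,\P(V \geq x)} \;=\; 1 + \frac{1}{x\,\P(V \geq x)} \int_x^\infty \P(V \geq y)\,\mathrm{d}y, \]
Karamata's theorem (applicable since $\gamma > 1$) gives
\[ \int_x^\infty \frac{\ell(y)}{y^\gamma}\,\mathrm{d}y \;\sim\; \frac{x\,\ell(x)}{(\gamma-1)\,x^\gamma} \;=\; \frac{x\,\P(V \geq x)}{\gamma-1}, \]
so $\E(V_x) \to 1 + 1/(\gamma-1) = \gamma/(\gamma-1) = \E(V_\infty)$. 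Combined with weak convergence and non-negativity, Scheff\'e-type arguments (or equivalently, convergence of the means forces uniform integrability of $\{V_x\}$) yield $V_x \to V_\infty$ in $L^1$.

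The step I expect to be most delicate is the application of Karamata's theorem, where one must be careful that the slowly varying factor $\ell$ does not spoil the asymptotics; Potter's bounds provide an alternative route, giving $\ell(xt)/\ell(x) \leq 2\max(t^\eps, t^{-\eps})$ for $x$ large and any $\eps > 0$, so choosing $\eps < \gamma - 1$ and dominating $\P(V_x \geq t) t^{\gamma-1-\eps}$ by an integrable function of $t$ furnishes uniform integrability directly, without having to compute $\E(V_x)$ in closed form.
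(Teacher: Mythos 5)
Your proposal is correct and follows essentially the same route as the paper: weak convergence of $V_x$ to the Pareto law from slow variation of $\ell$, followed by an application of Karamata's theorem to control first moments. The only cosmetic difference is that the paper establishes uniform integrability directly by estimating $\E(V_x; V_x \geq K)$, whereas you show $\E(V_x) \to \E(V_\infty)$ and invoke the standard equivalence (weak convergence plus convergence of means of non-negative variables gives $L^1$ convergence); these are interchangeable and rest on the same Karamata estimate.
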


\begin{proof}
	Since $V$ is in the domain of attraction of a $\gamma$-stable distribution, there exists $\ell$ slowly varying with $\P(V \geq x) = \ell(x) / x^\gamma$. For $y \geq 1$ we have
	\[ \P(V \geq yx \mid V \geq x) = \frac{\P(V \geq yx)}{\P(V \geq x)} = \frac{\ell(xy)/\ell(x)}{y^\gamma} \]
	and so the convergence $V_x \Rightarrow V_\infty$ follows from the fact that $\ell$ is slowly varying. Let us now prove that $V_x$ is uniformly integrable. For $K \geq 1$ we have
	\[ \E(V_x ; V_x \geq K) = \E(V/x 1(V / x \geq K) \mid V \geq x) = \frac{\E(V ; V \geq x K)}{x \P(V \geq x)}. \]
	We can compute
	\[ \E(V ; V \geq x K) = x K \P(V \geq x K) + \int_{x K}^\infty \P(V \geq u) \d u. \]
	Theorem~$1.5.11$ in~\cite{bingham89:0} entails that, as $x \to \infty$,
	\[ \E(V ; V \geq x K) \sim \frac{\gamma}{\gamma-1} x K \P(V \geq x K) \]
	and so
	\[ \E(V_x ; V_x \geq K) \sim \frac{\frac{\gamma}{\gamma-1} x K \P(V \geq x K)}{x \P(V \geq x)} = \frac{\gamma}{\gamma-1} \frac{\ell(xK)/(xK)^{\gamma-1}}{\ell(x)/x^{\gamma-1}} \sim \frac{\gamma}{(\gamma-1) K^{\gamma-1}} \]
	which shows the uniform integrability of $V_x$.
\end{proof}

\begin{lemma} \label{lemma:f}
	$f$ is bounded on $[1,\infty)$.
\end{lemma}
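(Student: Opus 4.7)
The bound should follow directly from Assumption R1 combined with the monotonicity of $\mu$. The key observation is that $L^1$ convergence in R1 gives in particular convergence of expectations, so $f(t) = \E(\mu(t)/t) \to 1/a$ as $t \to \infty$. This immediately takes care of the tail: there exists $T \geq 1$ such that $f(t) \leq 2/a$ for all $t \geq T$. It remains only to control $f$ on the compact interval $[1,T]$.

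For this I would use the fact that $t \mapsto \mu(t)$ is non-decreasing. For $t \in [1,T]$ we have $\mu(t) \leq \mu(T)$ and $1/t \leq 1$, so $\mu(t)/t \leq \mu(T)$, and therefore
\[ f(t) \ \leq \ \E(\mu(T)) \ = \ T \cdot f(T) \ \leq \ 2T/a. \]
In particular, the expectation $\E(\mu(T))$ is finite precisely because $f(T) \leq 2/a$ follows from the previous step, so the $L^1$ convergence in R1 does all the heavy lifting here.

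Combining the two cases, $f(t) \leq 2T/a$ for every $t \geq 1$, which is the desired bound. I do not foresee any real obstacle in this proof: the one thing to be mindful of is that $f(T)$ must already be known to be finite in order to conclude $\E(\mu(T)) < \infty$ in the second step, but that is precisely what R1 provides through the $L^1$ convergence of $\mu(t)/t$.
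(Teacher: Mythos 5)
Your proof is correct and follows essentially the same route as the paper: the $L^1$ convergence in Assumption R1 controls the tail, and the monotonicity of $t \mapsto \E(\mu(t)) = t f(t)$ (which you derive from the monotonicity of $\mu$) controls the compact part $[1,T]$. Your remark that $f(T) < \infty$ is itself supplied by the $L^1$ convergence is a worthwhile point that the paper leaves implicit.
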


\begin{proof}
	By assumption, we have $f(t) \to 1/a$ so we only have to show that $f$ is bounded on $[1,t]$ for every $t \geq 1$, which comes from the fact that $t f(t) = \E(\mu(t))$ is non-decreasing.
\end{proof}

\subsection{Convergence of $(R_p, \cZ_p)$} \label{sub:RZ}

We prove in this section that $(R_p, \cZ_p) \Rightarrow (a \cZ_\infty, \cZ_\infty)$. It is well-known that in order to prove the convergence of a random walk such as $(R_p, \cZ_p)$, one needs to control the law of its increments $(R(1), \cZ(1))$, and we begin by describing this law.

We enrich the probability space so as to consider a triple $(V, \mu, U) \in (0,\infty) \times \cM \times [0,1]$ such that under $\P$, $U$ is independent from $(V, \mu)$ and is uniformly distributed on $[0,1]$. From this triplet $(V, \mu, U)$ we construct the following random variables:
\[ \widehat Z = [U \mu(V)] \ \text{ and } \ \widehat R = A(\widehat Z). \]
Thus, under $\P$, $\widehat Z$ is the rank of an atom of $\cP = \mu\mid_V$ chosen uniformly at random and $\widehat R$ is its location. We also consider the measure $\widehat \P$ which is absolutely continuous with respect to $\P$ with Radon--Nikodym derivative $\mu(V)$: for any random variable $X \geq 0$,
\[ \widehat \E(X) = \E \left( \mu(V) X \right). \]
The measure $\widehat \P$ is the usual \textit{offspring-biased} measure, see for instance~\cite{Lyons95:0, Lyons96:0}.

\begin{lemma} \label{lemma:law}
	$(V_{T(1)-1}, \cZ(1), R(1))$ under $\P$ is equal in distribution to $(V, \widehat Z, \widehat R)$ under $\widehat \P$. In particular, for any $f: \R_+ \times \N \times \N \to \R_+$ we have
	\[ \E \left[ f \left( V_{T(1)-1}, \cZ(1), R(1) \right) \right] = \E \left[ \mu(V) f(V, \widehat Z, \widehat R) \right]. \]
\end{lemma}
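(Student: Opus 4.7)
The plan is to prove the claim by explicit computation of both sides. First, unfolding the definitions of $\widehat\P$, $\widehat Z$, and $\widehat R$, the right-hand side rewrites as
\[
\E[\mu(V) f(V, \widehat Z, \widehat R)] = \E\Bigl[ \sum_{j=0}^{\mu(V)-1} f\bigl(V, j, A_\mu(j)\bigr) \Bigr],
\]
using that conditional on $(V, \mu)$ with $\mu(V) \geq 1$, the variable $\widehat Z = [U\mu(V)]$ is uniform on $\{0, 1, \ldots, \mu(V) - 1\}$, and $A_\cP = A_\mu$ on this range. The goal is then to match the left-hand side with this expression.

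For the left-hand side, I would partition over the value of $T(1) = k+1$ for $k \geq 0$. On the event $\{T(1) = k+1\}$, one has $V_{T(1)-1} = V_k$, $\cZ(1) = S(k) + |\cP_k| - 1$, and $R(1) = A_{\cP_k}(\cZ(1))$. Since $(V_k, \cP_k)$ is independent of $(S(0), \ldots, S(k))$, the contribution for $k \geq 1$ factors as $\P(V \in dv, \cP \in d\pi) \cdot q_k(m)$, where $-m = S(k) \in \{-(|\pi|-1), \ldots, -1\}$ and
\[
q_k(m) := \P\bigl( S(1), \ldots, S(k-1) \leq -1, \, S(k) = -m \bigr).
\]
The $k = 0$ contribution directly gives the index $j = |\cP| - 1$ with weight $\P(V \in dv, \cP \in d\pi)$ (when $|\cP| \geq 1$), while summing over $k \geq 1$ assigns, for each $m \in \{1, \ldots, |\pi| - 1\}$, the weight $\bigl(\sum_{k \geq 1} q_k(m)\bigr) \P(V \in dv, \cP \in d\pi)$ to the index $j = |\pi| - 1 - m$. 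The identity therefore reduces to showing that $\sum_{k \geq 1} q_k(m) = 1$ for every $m \geq 1$.

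The key step, which I expect to be the main obstacle, is this sum identity. My plan is to invoke path reversal: for each fixed $n \geq 1$, by exchangeability of the i.i.d.\ increments, the reversed walk $\widehat S(k) := S(n) - S(n-k)$ has the same distribution as $(S(k))_{0 \leq k \leq n}$. Under the reversal and using $S(n) = -m$, the condition $S(j) \leq -1$ for $j = 1, \ldots, n-1$ rewrites as $\widehat S(i) \geq -(m-1)$ for $i = 1, \ldots, n-1$. Since the increments of $S$ are bounded below by $-1$, the walk is skip-free to the left, so $\widehat S(n) = -m$ together with $\widehat S(i) > -m$ for $i < n$ means that $-m$ is reached for the first time at step $n$; consequently $q_n(m) = \P(T_{-m} = n)$, where $T_{-m} = \inf\{k \geq 1 : S(k) = -m\}$. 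Summing over $n$ and using that the critical (and non-degenerate) random walk $S$ is a.s.\ recurrent then gives $\sum_{k \geq 1} q_k(m) = \P(T_{-m} < \infty) = 1$.

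Finally, combining the $k = 0$ contribution (providing $j = |\cP| - 1$) with the reindexed sum over $k \geq 1$ (providing $j = 0, \ldots, |\cP| - 2$) reconstructs $\E\bigl[ \sum_{j=0}^{|\cP|-1} f(V, j, A_\cP(j)) \bigr]$, which matches the rewritten right-hand side and completes the proof.
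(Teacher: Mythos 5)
Your proof is correct, and it is self-contained where the paper is not. The treatment of the right-hand side --- conditioning on $\mu(V)=x$, using that $[Ux]$ is uniform on $\{0,\dots,x-1\}$ to rewrite $\E[\mu(V)f(V,\widehat Z,\widehat R)]$ as $\E\bigl[\sum_{j=0}^{\mu(V)-1}f(V,j,A(j))\bigr]$ --- is exactly the paper's computation. For the left-hand side, however, the paper simply quotes Equation (2.3) of \cite{Schertzer19:0} for the law of $(\cZ(1),R(1))$ and asserts that the argument there extends to carry $V_{T(1)-1}$ along; you instead rederive that law from scratch: decompose on the first ladder epoch $T(1)=k+1$, split off the last step $(V_k,\cP_k)$ by independence, and reduce everything to the normalization $\sum_{k\ge 1}q_k(m)=1$, which you obtain from duality (time reversal), skip-freeness to the left, and oscillation/recurrence of the critical walk. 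This is precisely the classical argument underlying the cited identity, so the two proofs rest on the same mathematics, but yours has the merit of making the lemma independent of the companion paper and of exposing where criticality enters (without $\P(T_{-m}<\infty)=1$ the weights would not sum to one). Two minor points to make explicit in a polished version: the non-degeneracy needed for oscillation holds under Assumption~A (and if $\mu(V)\equiv 1$ the $m\ge 1$ terms are vacuous, so nothing is lost); and your partition over $k$ implicitly uses $T(1)<\infty$ a.s., which follows from the same oscillation argument (and is confirmed a posteriori by taking $f\equiv 1$, both sides then equalling $\E[\mu(V)]=1$).
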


\begin{proof}
	Let $f: \R_+ \times \N \times \N \to \R_+$ be measurable. By definition,
	\begin{align*}
		\widehat \E(f(V, \widehat Z, \widehat R)) & = \E \left( \mu(V) f(V, \widehat Z, A(\widehat Z)) \right)\\
		& = \sum_{x \geq 1} x \E \left( f(V, [Ux], A([Ux])) ; \mu(V) = x \right).
	\end{align*}
	Since $U$ under $\P$ is uniformly distributed on $[0,1]$, $[Ux]$ is uniformly distributed on $\{0, \ldots, x-1\}$ and so
	\begin{align*}
		\widehat \E(f(V_{T(1)-1}, \widehat Z, \widehat R)) & = \sum_{x \geq 1} \sum_{z = 0}^{x-1} \E \left( f(V, z, A(z)) ; \mu(V) = x \right)\\
		& = \sum_{x \geq 1, z \geq 0} \E \left( f(V, z, A(z)) ; \mu(V) = x+z \right).
	\end{align*}
	
	On the other hand, we have according to~\cite[Equation (2.3)]{Schertzer19:0}
	\[ \E(f(\cZ(1), R(1))) = \sum_{x \geq 1, z \geq 0} \E \left( f(z, A_\cP(z)) ; \lvert \cP \rvert = x+z \right) \]
	and the argument of the proof of this equality readily generalizes to give
	\[ \E(f(V_{T(1)-1}, \cZ(1), R(1))) = \sum_{x \geq 1, z \geq 0} \E \left( f(V, z, A_\cP(z)) ; \lvert \cP \rvert = x+z \right). \]
	Since $\lvert \cP \rvert = \mu(V)$ and $A(z) = A_{\cP}(z)$ for $z \leq \mu(V)$, the two expressions match, i.e., $\E(f(V_{T(1)-1}\cZ(1), R(1))) = \widehat \E(f(V, \widehat Z, \widehat R))$, which proves the result.
\end{proof}

This expression for the law of $(V_{T(1)-1}, R(1), \cZ(1))$ makes it possible to prove the two following technical lemmas.

\begin{lemma}\label{lemma:g>}
	Let $g: \R^2 \to \R$ be measurable, bounded and such that the Lebesgue measure of its set of discontinuity points is zero. Then for any $\delta > 0$ we have
	\[ p v_p \E \left( g\left(v_p R(1), v_p \cZ(1)\right) ; V_{T(1)-1} \geq \delta / v_p \right) \to \frac{1}{a \delta^{\gamma-1}} \E \left( V_\infty g \left( \delta U V_\infty, \frac{\delta U V_\infty}{a} \right) \right) \]
	where $U$ and $V_\infty$ are independent.
\end{lemma}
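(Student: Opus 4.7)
The plan is to reduce the left-hand side to an expression where the independence of $V$, $\mu$, $U$ can be exploited, and then to combine Assumption~V (giving the rate of $p\P(V \geq \delta/v_p)$), Lemma~\ref{lemma:V_x} (describing the distribution of $V$ given it is large), and Assumption~R1 (together with its inversion into $A(n)/n \to a$) to identify the limit. The first move is to apply Lemma~\ref{lemma:law} with $(v,z,r) \mapsto g(v_p r, v_p z) \mathbf{1}(v \geq \delta/v_p)$ to rewrite
\[ p v_p \E \left( g(v_p R(1), v_p \cZ(1)); V_{T(1)-1} \geq \delta/v_p \right) = p v_p \E \left( \mu(V) g(v_p A(\widehat Z), v_p \widehat Z); V \geq \delta/v_p \right), \]
with $\widehat Z = [U\mu(V)]$. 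Conditioning on $V$ separates the factor $p\P(V \geq \delta/v_p) \to \delta^{-\gamma}$ (by Assumption~V) from the conditional expectation of $v_p \phi_p(V)$, where $\phi_p(v) = \E(\mu(v) g(v_p A([U\mu(v)]), v_p [U\mu(v)]))$ (expectation over $\mu$ and $U$ only).

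Next I would establish pointwise convergence of $v_p \phi_p(y/v_p)$ for fixed $y>0$. Writing $v_p \phi_p(y/v_p) = y\, \E[(v_p\mu(y/v_p)/y)\, g(\cdots)]$, Assumption~R1 gives $v_p \mu(y/v_p)/y \to 1/a$ in $L^1$; this forces $v_p [U\mu(y/v_p)] \to Uy/a$ in probability and, through the sandwich $\mu(A(n)-) \leq n \leq \mu(A(n))$, the inversion $A(n)/n \to a$, so that $v_p A([U\mu(y/v_p)]) \to Uy$. Since $(Uy, Uy/a)$ has an absolutely continuous law (because $U$ does), almost every realization of the limit is a continuity point of $g$, and boundedness of $g$ combined with the $L^1$ convergence above allows me to pass the expectation through, yielding
\[ v_p \phi_p(y/v_p) \to \frac{y}{a}\, \E\bigl(g(Uy, Uy/a)\bigr). \]

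Finally I average this pointwise convergence against the conditional law of $Y := v_p V$ given $V \geq \delta/v_p$, which by Lemma~\ref{lemma:V_x} converges to $\delta V_\infty$ in $L^1$. To pass to the limit in expectation I need uniform integrability of $v_p \phi_p(V)$ under this conditional law, which I obtain from the bound $|v_p \phi_p(V)| \leq \|g\|_\infty v_p V f(V)$ together with Lemma~\ref{lemma:f} (boundedness of $f$) and the $L^1$-convergence from Lemma~\ref{lemma:V_x}. Combining gives $\E(v_p \phi_p(V) \mid V \geq \delta/v_p) \to (\delta/a) \E(V_\infty g(\delta U V_\infty, \delta U V_\infty/a))$, and multiplying by $\delta^{-\gamma}$ produces the claimed limit. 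The main obstacle is step two, namely upgrading the deterministic-argument convergences in Assumptions~R1 and the inversion $A(n)/n \to a$ into a joint convergence when evaluated at a random $v \to \infty$, and then passing the expectation through in a way compatible with the subsequent averaging over the heavy-tailed variable $V$; all the needed tightness is ultimately supplied by Lemmas~\ref{lemma:V_x} and~\ref{lemma:f}.
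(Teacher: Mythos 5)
Your proposal is correct and follows essentially the same route as the paper: bias by $\mu(V)$ via Lemma~\ref{lemma:law}, factor out $p\,\P(V\geq\delta/v_p)\to\delta^{-\gamma}$, identify the limit of the inner expectation using Assumption~R1 and the inversion $A(n)/n\to a$, and close with the uniform integrability supplied by Lemmas~\ref{lemma:V_x} and~\ref{lemma:f} together with a generalized dominated convergence argument. The one point to tighten is precisely the obstacle you flag yourself: pointwise convergence of $v_p\phi_p(y/v_p)$ for each fixed $y$ plus uniform integrability is not by itself enough to average against the $p$-dependent conditional law of $v_pV$; you need convergence along moving arguments $y_p\to y$ (the paper's device of proving $\phi(x,\alpha(x))\to a^{-1}\E\left[g(\delta U v_\infty,\delta U v_\infty/a)\right]$ for every function $\alpha$ with $\alpha(x)\to v_\infty$), which here costs nothing extra since R1 and the resulting inversion are statements about arbitrary $t\to\infty$.
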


\begin{proof}
	Lemma~\ref{lemma:law} gives
	\[ p v_p \E \left( g(v_p R(1), v_p Z(1)) ; V_{T(1)-1} \geq \delta / v_p \right) = p v_p \E \left( \mu(V) g(v_p \widehat R, v_p \widehat Z) ; V \geq \delta / v_p \right). \]
	Let $x = \delta / v_p$: recalling that, by definition, $V_x$ is equal in distribution to $V/x$ conditioned on $V \geq x$ and that $\mu$ and $V$ are independent, we can write
	\begin{multline*}
		p v_p \E \left( \mu(V) g(v_p \widehat R, v_p \widehat Z) ; V \geq \delta / v_p \right)\\
		= p (\delta / x) \P(V \geq x) \E \left( \mu(x V_x) g( (\delta / x) A([U \mu(x V_x)]), (\delta / x) [U \mu(x V_x)]) \right)
	\end{multline*}
	with $\mu$ and $V_x$ independent. Introducing
	\[ \Phi(u,v) = \frac{\mu(uv)}{uv} g \left( \frac{\delta A([U \mu(uv)])}{u} , \frac{\delta [U \mu(uv)]}{u} \right) \ \text{ and } \ \phi(u,v) = \E(\Phi(u,v)), \]
	we can rewrite this as
	\begin{equation} \label{eq:g}
		p v_p \E \left( g(v_p R(1), v_p Z(1)) ; V_{T(1)-1} \geq \delta / v_p \right) = p \delta \P(V \geq \delta / v_p) \E \left( V_x \phi(x, V_x) \right).
	\end{equation}
	By choice of $v_p$ we have $p \P(V \geq \delta / v_p) \to \delta^{-\gamma}$ and so it remains to show that $\E(V_x \phi(x, V_x)) \to \E(V_\infty g(\delta UV_\infty, \delta U V_\infty / a)) / a$. Since $V_x \Rightarrow V_\infty$ and $V_x$ is uniformly integrable (Lemma~\ref{lemma:V_x}) and $\lvert \phi(u,v) \rvert \leq \sup \lvert g \rvert f(uv)$ with $f$ bounded (Lemma~\ref{lemma:f}), we only have to show that $\phi(x, \alpha(x)) \to a^{-1} \E [g(\delta U v_\infty, \delta U v_\infty / a)]$ as $x \to \infty$, for any function $\alpha$ with $\alpha(x) \to v_\infty \in [1,\infty)$ as $x \to \infty$.
	
	For such $\alpha$, we have as $x \to \infty$
	\[ \frac{\mu(x \alpha(x))}{x \alpha(x)} \Rightarrow \frac{1}{a} \ \text{ and } \ \left( \frac{\delta A([U \mu(x \alpha(x))])}{x} , \frac{\delta [U \mu(x \alpha(x))]}{x} \right) \Rightarrow (\delta U v_\infty, \delta U v_\infty / a). \]
	Since by assumption, $g$ is almost surely continuous at $(\delta U v_\infty, \delta U v_\infty / a)$, we obtain $\Phi(x,\alpha(x)) \to g(\delta U v_\infty, \delta U v_\infty / a) / a$.
	
	Next, since $g$ is bounded, we have $\Phi(x, \alpha(x)) \leq c \mu(x \alpha(x))/(x \alpha(x))$ for some finite constant $c > 0$. Since the bound $\mu(x \alpha(x)) / (x \alpha(x))$ converges to $1/a$ in $L_1$ and also in the mean, the dominated convergence theorem~\cite[Theorem 1.21]{Kallenberg02:0} implies that $\phi(x, \alpha(x)) \to \E [g(\delta U v_\infty, \delta U v_\infty / a)] / a$ as desired.
\end{proof}

\begin{lemma}\label{lemma:g<}
	Let $g: \R^2 \to \R$ be measurable, bounded and $0$ in a neighborhood of~$0$. Then for $\delta > 0$ small enough we have
	\[ p v_p \E \left( g\left(v_p R(1), v_p \cZ(1)\right) ; V_{T(1)-1} < \delta / v_p \right) \to 0. \]
\end{lemma}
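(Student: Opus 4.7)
The plan is to exploit the two deterministic bounds $\widehat R \leq V$ and $\widehat Z \leq \mu(V)$ coming from the construction of $(\widehat R, \widehat Z)$, together with Assumption~R2. On the event $\{V < \delta/v_p\}$, the first bound forces $v_p \widehat R$ to be small, so if $\delta$ is chosen smaller than the radius of the zero-neighborhood of $g$, the integrand can only be nonzero when $v_p \widehat Z$ is large, and hence $\mu(V) \geq \widehat Z$ is large. In other words, only configurations where $V$ is small \emph{and} $\mu$ puts atypically many atoms near $0$ can contribute, and R2 is precisely the assumption that rules these out.

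More precisely, I would first apply Lemma~\ref{lemma:law} to rewrite the left-hand side as $p v_p \E(\mu(V) g(v_p \widehat R, v_p \widehat Z); V < \delta/v_p)$. Fix $\eta > 0$ such that $g \equiv 0$ on $\{(x,y): |x| + |y| < \eta\}$, fix $a' < a$ as in~R2, and choose $\delta \leq \min(1, a') \eta/2$. Since $\widehat R \leq V$, on $\{V < \delta/v_p\}$ one has $v_p \widehat R < \eta/2$, so $g(v_p \widehat R, v_p \widehat Z)$ is nonzero only when $v_p \widehat Z \geq \eta/2$, which forces $\mu(V) \geq \eta/(2 v_p)$. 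Bounding $|g|$ by $\|g\|_\infty$ reduces the problem to showing
\[ p v_p \, \E\bigl(\mu(V) \,;\, V < \delta/v_p, \, \mu(V) \geq \eta/(2 v_p)\bigr) \longrightarrow 0. \]

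The next step is to split this expectation at the level $M/v_p$ for $M > \eta/2$. On the bounded piece $\{\mu(V) < M/v_p\}$, I would replace $\mu(V)$ by its upper bound $M/v_p$ and use the pathwise inequality $\mu(V) \leq \mu(\delta/v_p)$ together with the independence of $\mu$ and $V$ to obtain the upper bound $p M \, \P\bigl(\mu(\delta/v_p) \geq \delta/(a' v_p)\bigr)$; since the threshold $\delta/(a' v_p) \leq \eta/(2 v_p)$ matches the one in~R2, this quantity vanishes. On the tail piece $\{\mu(V) \geq M/v_p\}$, the slowly varying tail $\P(\mu(V) \geq n) = \ell(n)/n^\gamma$ established in the proof of Lemma~\ref{lemma:DA-mu(V)} together with Karamata's theorem (Theorem~1.5.11 in~\cite{Bingham89:0}) gives
\[ \E\bigl(\mu(V) \,;\, \mu(V) \geq M/v_p\bigr) \; \sim \; \frac{\gamma}{\gamma - 1} \, \frac{M}{v_p} \, \P\bigl(\mu(V) \geq M/v_p\bigr), \]
and combining this with $p \P(\mu(V) \geq M/v_p) \to (aM)^{-\gamma}$ from Lemma~\ref{lemma:DA-mu(V)} yields
\[ p v_p \, \E\bigl(\mu(V) \,;\, \mu(V) \geq M/v_p\bigr) \; \longrightarrow \; \frac{\gamma}{(\gamma - 1) a^\gamma} \, M^{1-\gamma}. \]
Together, the two estimates give $\limsup_p p v_p |\E(g(v_p R(1), v_p \cZ(1)); V_{T(1)-1} < \delta/v_p)| \leq C \|g\|_\infty M^{1-\gamma}$ for every $M > \eta/2$; letting $M \to \infty$ and using $\gamma > 1$ concludes the proof.

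The main delicate point is the tension between the diverging prefactor $p v_p$ and the shrinking expectation on the tail piece: mere uniform integrability of $v_p \mu(\delta/v_p)$ (which is essentially part of Assumption~R1) is not strong enough to absorb the extra factor $p$, and the sharp Karamata asymptotic for the tail of $\mu(V)$ is what produces the finite constant $M^{1-\gamma}$ that can then be sent to zero by taking $M$ large.
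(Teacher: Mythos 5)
Your proof is correct, but it takes a genuinely different route from the paper's. The two arguments share the opening move: apply Lemma~\ref{lemma:law} and use the pathwise bound $\widehat R \leq V$ (the paper's analogue is $R(1) \leq V_{T(1)-1}$) to kill the first coordinate of $g$ on the event $\{V_{T(1)-1} < \delta/v_p\}$, reducing the problem to a statement about $\cZ(1)$ alone. From there the paper argues by subtraction: it computes the exact limit $\frac{1}{(\gamma-1)a^\gamma m^{\gamma-1}}$ of $p v_p \P(v_p \cZ(1) > m)$ from the tail of $\mu(V)$ via the identity~\eqref{eq:id} and Karamata, separately computes the limit of $p v_p \P(v_p \cZ(1) > m,\ V_{T(1)-1} \geq \delta/v_p)$ by applying Lemma~\ref{lemma:g>} with $g = 1(z>m)$, checks that the two constants coincide, and concludes that their difference --- the quantity of interest --- vanishes. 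You instead give a direct domination: truncate $\mu(V)$ at level $M/v_p$, control the bounded piece by Assumption~R2 applied at $x=\delta$ (after the monotonicity bound $\mu(V) \leq \mu(\delta/v_p)$, which is where your constraint $\delta \leq a'\eta/2$ is needed), and bound the tail piece by $\frac{\gamma}{(\gamma-1)a^\gamma}M^{1-\gamma}$ uniformly in $p$ via the same Karamata estimate, then send $M \to \infty$. Your route is more self-contained --- it does not lean on Lemma~\ref{lemma:g>} or on the exact cancellation of two separately computed limits --- and it makes transparent where R2 enters (in the paper's proof R2 acts only indirectly, through the tail asymptotics of $\mu(V)$ established in Lemma~\ref{lemma:DA-mu(V)}). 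The paper's version, in exchange, recycles Lemma~\ref{lemma:g>} and avoids the extra truncation parameter $M$. Both arguments are sound, and your final remark correctly identifies the real difficulty: uniform integrability alone cannot absorb the diverging factor $p v_p$, so the sharp regular-variation asymptotic for $\E(\mu(V);\mu(V)\geq M/v_p)$ is essential.
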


\begin{proof}
	In the proof we will use the following identity: if $X \geq 0$ and $U$ is an independent uniform random variable, then for every $x \geq 0$ one readily checks that
	\begin{equation} \label{eq:id}
		\E(X ; UX \geq x) = \int_x^\infty \P(X \geq u) \d u.
	\end{equation}

	Let us now proceed with the proof. Let $c = \sup \lvert g \rvert$ and $m$ such that $g(u,v) = 0$ if $\lvert u \rvert < m$ and $\lvert v \rvert < m$. Fix in the rest of the proof $\delta > 0$ with $\delta < m$ and $\delta < am$. Then
	\begin{multline*}
		\left \lvert p v_p \E \left( g(v_p R(1), v_p \cZ(1)); V_{T(1)-1} < \delta/v_p \right) \right \rvert\\
		\leq c p v_p \E \left( 1(v_p R(1) > m) + 1(v_p \cZ(1) > m); V_{T(1)-1} < \delta/v_p \right)
	\end{multline*}
	and since $\delta < m$ and $R(1) \leq V_{T(1)-1}$, we have $\P(m/v_p < R(1), V_{T(1)-1} < \delta / v_p) = 0$ and so we obtain
	\begin{multline*}
		\left \lvert p v_p \E \left( g(v_p R(1), v_p \cZ(1)); V_{T(1)-1} < \delta/v_p \right) \right \rvert\\
		\leq c p v_p \P \left( v_p \cZ(1) > m, V_{T(1)-1} < \delta/v_p \right).
	\end{multline*}
	Using Lemma~\ref{lemma:law}, we obtain
	\begin{align*}
		p v_p \P \left( v_p \cZ(1) > m \right) & = p v_p \E\left( \mu(V) ; [U \mu(V)] \geq m / v_p \right)\\
		& = p v_p \E\left( \mu(V) ; U \mu(V) \geq [m / v_p] \right).
	\end{align*}
	According to Lemma~\ref{lemma:DA-mu(V)}, $\mu(V)$ is in the domain of attraction of a $\gamma$-stable distribution, so there exists $\ell$ slowly varying with $\P(\mu(V) \geq x) = \ell(x) / x^\gamma$. Then according to~\eqref{eq:id},
	\[ p v_p \P \left( v_p \cZ(1) > m \right) = p v_p \int_{[m/v_p]}^\infty \ell(u) u^{-\gamma} \d u \]
	and so Proposition $1.5.10$ in~\cite{Bingham89:0} implies that
	\[ p v_p \P \left( v_p \cZ(1) > m \right) \sim p v_p \frac{\ell(m/v_p)}{(\gamma-1) (m/v_p)^{\gamma-1}} \sim \frac{1}{(\gamma-1) a^\gamma m^{\gamma-1}} \]
	with the last equivalence coming from Lemma~\ref{lemma:DA-mu(V)} which implies that $p v^\gamma_p \ell(1/v_p) \to a^{-\gamma}$. On the other hand, taking $g$ equal to $1(z>m)$, Lemma~\ref{lemma:g>} gives
	\[ p v_p \P( v_p \cZ(1) > m, V_{T(1)-1} \geq \delta/v_p) \to \frac{1}{a \delta^{\gamma-1}} \E \left( V_\infty ; \delta U V_\infty \geq a m \right). \]
	Using~\eqref{eq:id}, we obtain
	\begin{align*}
		\delta^{1-\gamma} a^{-1} \E (V_\infty ; \delta U V_\infty \geq m) & = \frac{1}{a \delta^{\gamma-1}} \int_{am/\delta}^\infty \P(V_\infty \geq u) \d u = \frac{1}{(\gamma-1) a^\gamma m^{\gamma-1}}
	\end{align*}
	using for the last inequality the fact that $am/\delta > 1$ and $\P(V_\infty \geq x) = x^{-\gamma}$ for $x \geq 1$. For $\delta$ small enough, we have proved that $p v_p \P( v_p \cZ(1) > m, V_{T(1)-1} \geq \delta/v_p)$ and $p v_p \P( v_p \cZ(1) > m)$ have the same limit. Thus, their difference vanishes which is exactly the desired result.
\end{proof}

\begin{corollary} \label{lemma:R-Z}
	For any $x, y > 0$ we have
	\[ p v_p \P \left( v_p R(1) \geq x, v_p \cZ(1) \geq y \right) \to \frac{1}{a (\gamma-1) (\max(x, ay))^{\gamma-1}}. \]
	In particular, $(R_p, \cZ_p) \Rightarrow (a \cZ_\infty, \cZ_\infty)$.
\end{corollary}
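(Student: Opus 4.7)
The plan is to apply Lemmas~\ref{lemma:g>} and~\ref{lemma:g<} to the test function $g(u,v) = \Indicator{u \geq x, v \geq y}$. For fixed $x, y > 0$, this $g$ is bounded, vanishes on the open neighborhood $\{u < x\} \cup \{v < y\}$ of the origin, and has discontinuity set contained in the two lines $\{u = x\}$ and $\{v = y\}$, which has zero Lebesgue measure; the hypotheses of both lemmas will therefore be met. Splitting according to whether $V_{T(1)-1} < \delta/v_p$, Lemma~\ref{lemma:g<} will kill the former contribution for $\delta > 0$ small enough, while Lemma~\ref{lemma:g>} will yield
\[
\lim_{p \to \infty} p v_p \E \left( g(v_p R(1), v_p \cZ(1)) ; V_{T(1)-1} \geq \delta/v_p \right) = \frac{1}{a \delta^{\gamma - 1}} \E \left( V_\infty \Indicator{\delta U V_\infty \geq \max(x, ay)} \right),
\]
using that $g(\delta U V_\infty, \delta U V_\infty / a) = \Indicator{\delta U V_\infty \geq \max(x, ay)}$.

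Next I would compute this expectation: for $\delta$ small enough that $\max(x, ay)/\delta \geq 1$, identity~\eqref{eq:id} applied with $X = V_\infty$ together with $\P(V_\infty \geq t) = t^{-\gamma}$ for $t \geq 1$ gives
\[
\E \left( V_\infty \Indicator{U V_\infty \geq \max(x, ay)/\delta} \right) = \int_{\max(x,ay)/\delta}^\infty t^{-\gamma} \, \d t = \frac{\delta^{\gamma - 1}}{(\gamma-1) \max(x, ay)^{\gamma - 1}},
\]
and the factors of $\delta$ cancel, giving the announced limit $\frac{1}{a(\gamma-1) \max(x, ay)^{\gamma-1}}$.

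For the functional convergence $(R_p, \cZ_p) \Rightarrow (a \cZ_\infty, \cZ_\infty)$, I would observe that $(R, \cZ)$ is a bivariate random walk with nonnegative i.i.d.\ increments and that $(R_p, \cZ_p)$ is this walk rescaled by $p v_p$ time steps of size $v_p$. Since $\gamma - 1 \in (0,1)$, the standard functional limit theorem for random walks attracted to a stable subordinator reduces the claim to identifying the L\'evy measure $\nu$ of the limit from the joint tail just computed. The formula $\nu([x, \infty) \times [y, \infty)) = \frac{1}{a(\gamma-1) \max(x, ay)^{\gamma-1}}$ depends on $(x, y)$ only through $\max(x, ay)$, which forces $\nu$ to be supported on the half-line $\{(az, z): z > 0\}$; hence the limit takes the form $(a Z, Z)$ for some $(\gamma-1)$-stable subordinator $Z$. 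Since the marginal convergence $\cZ_p \Rightarrow \cZ_\infty$ is already known from Lemma~\ref{lemma:S-T-Z}, one must have $Z = \cZ_\infty$ in distribution, and the result follows. The only real subtlety I anticipate is this recognition step — that $\nu$ concentrates on the line $\{r = az\}$, which formalises the heuristic that a large $\cZ(1)$ is asymptotically caused by a large $V$ for which $R(1) \approx a \cZ(1)$ since $\mu$ looks like a renewal process of intensity $1/a$ — but once it is pinned down, no extra functional tightness is needed beyond the classical stable-subordinator result.
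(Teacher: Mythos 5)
Your proposal is correct and follows essentially the same route as the paper: the same $\delta$-splitting via Lemmas~\ref{lemma:g>} and~\ref{lemma:g<} applied to the indicator of $[x,\infty)\times[y,\infty)$, the same computation of $\E(V_\infty;\,\delta U V_\infty\geq\max(x,ay))$ via~\eqref{eq:id}, and the same appeal to a bivariate stable limit theorem for the functional statement. Your identification of the limit (the joint tail depends only on $\max(x,ay)$, forcing the L\'evy measure onto the line $\{(az,z)\}$, then matching $Z=\cZ_\infty$ via the known marginal) is just a rephrasing of the paper's observation that the joint tail coincides with that of $(a\cZ(1),\cZ(1))$.
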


\begin{proof}
	Consider $\delta > 0$ and write
	\begin{multline*}
		p v_p \P \left( v_p R(1) \geq x, v_p \cZ(1) \geq y \right) = p v_p \P \left( v_p R(1) \geq x, v_p \cZ(1) \geq y, V_{T(1) - 1} \geq \delta / v_p \right)\\
		+ p v_p \P \left( v_p R(1) \geq x, v_p \cZ(1) \geq y, V_{T(1) - 1} < \delta / v_p \right).
	\end{multline*}
	For $\delta$ small enough, the second term vanishes by Lemma~\ref{lemma:g<}, so that we obtain by Lemma~\ref{lemma:g>}
	\[ p v_p \P \left( v_p R(1) \geq x, v_p \cZ(1) \geq y \right) \to \frac{1}{a \delta^{\gamma-1}} \E \left( V_\infty ; \delta U V_\infty \geq x, \delta U V_\infty \geq a y \right). \]
	Let $d = \max(x, ay)$: then for $\delta < d$, we have by definition of $V_\infty$ and~\eqref{eq:id}
	\[ \E \left( V_\infty ; \delta U V_\infty \geq x, \delta U V_\infty \geq a y \right) = \int_{d/\delta}^\infty u^{-\gamma} \d u = \frac{(\delta/d)^{\gamma-1}}{\gamma-1} \]
	which gives the desired convergence. This convergence implies that $(R_p(1), \cZ_p(1))$ converges in distribution to a bivariate stable random variable by an application of a bivariate Tauberian theorem that generalizes the argument of Gnedenko and Kolmogorov  \cite{Gnedenko68:0} mentioned above. 
		One can for instance use the results in~\cite{Haan78:0}.
	
	It remains to identify the limit. The same arguments as above lead to
	\[ p v_p \P \left(v_p \cZ(1) \geq y \right) \to \frac{1}{a^\gamma (\gamma-1) y^{\gamma-1}} \]
	and so the limit of $p v_p \P \left( v_p R(1) \geq x, v_p \cZ(1) \geq y \right)$ is the same as the limit obtained by replacing $R(1)$ by $a \cZ(1)$. This shows that $(R_p(1), \cZ_p(1)) \Rightarrow (a \cZ_\infty(1), \cZ_\infty(1))$, and this convergence implies the desired result.
\end{proof}

\begin{remark} \label{rk:VR}
	With the same arguments as in the previous proof, one can show that $p v_p \P \left( v_p R(1) \geq x \right) \to 1 / (a (\gamma-1) x^{\gamma-1})$. In particular, $R(1)$ is in the domain of attraction of stable distribution with index $\gamma-1 \in (0,1)$, so that $\E(R(1)) = \infty$. This proves the claim made in the introduction that we focus in the present paper in the case $\E(V) < \infty$ and $\E(\int u \cP(\d u)) = \infty$ (since $\E(\int u \cP(\d u)) = \E(R(1))$).
\end{remark}

\subsection{Proof of Theorem~\ref{thm:main}} \label{sub:proof}

We now proceed to the proof of Theorem~\ref{thm:main}, i.e., we assume that Assumption A holds and we prove that
\begin{equation} \label{eq:conv-thm}
	(\H_p, \C_p, \underline S_p) \fdd \ \left( a \underline S_\infty, a \underline S_\infty(\cdot/2\E(V)), \underline S_\infty \right).
\end{equation}

\subsubsection{It is enough to prove that $(\H_p, \underline S_p) \fdd \ (a \underline S_\infty, \underline S_\infty)$}

First, we recall Theorem~$4.1$ in~\cite{Schertzer18:0} which, in the non-triangular case of the present paper, can be stated as follows.

\begin{theorem}[Theorem $4.1$ in~\cite{Schertzer18:0}]
	If the following conditions hold:
	\begin{itemize}
		\item $\E(V) < \infty$ and $\E(\mu(V)) = 1$;
		\item $S_p \Rightarrow S_\infty$;
		\item $\H_p \fdd \H_\infty$ for some process $\H_\infty$ which is (almost surely) continuous at $0$ and satisfies the condition $\P(\H_\infty(t) > 0)$ for every $t > 0$;
	\end{itemize}
	then $(\H_p, \C_p) \fdd (\H_\infty, \H_\infty(\, \cdot \,/(2\E(V)))).$
\end{theorem}

As the two first assumptions of this result hold under Assumption A (see Lemma~\ref{lemma:DA-mu(V)} for the convergence $S_p \Rightarrow S_\infty$), it follows from this result that in order to prove~\eqref{eq:conv-thm} it is enough to prove that $(\H_p, \underline S_p) \fdd \ (a \underline S_\infty, \underline S_\infty)$.
%
%

\subsubsection{Proof of $(\H_p, \underline S_p) \fdd \ (a \underline S_\infty, \underline S_\infty)$}

By definition of tightness and of the product topology, tightness of two sequences $(X^1_p)$ and $(X^2_p)$ is equivalent to the tightness of the pair of sequences $(X^1_p, X^2_p)$ in the product topology. However, the following stronger result holds for random walks.

\begin{lemma} \label{lemma:sk}
Let $(X^1_p, X^2_p, X^3_p)$ be a three dimensional random walk. If each sequence $(X^1_p)$, $(X^2_p)$ and $(X^3_p)$ is tight in $D(\R)$, then $(X^1_p, X^2_p, X^3_p)$ is tight in $D(\R^3)$.
\end{lemma}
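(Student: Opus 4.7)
The plan is to apply Aldous' tightness criterion to the joint process, using the strong Markov property of the random walk to reduce the joint increment control to a coordinatewise control at deterministic times. Concretely, by Aldous' criterion for sequences in $D([0,T],\R^3)$, joint tightness of $(X^1_p, X^2_p, X^3_p)$ follows from: \textbf{(a)} tightness of $X_p(t)$ in $\R^3$ for every $t \in [0,T]$, and \textbf{(b)} for every $\eps > 0$,
\[
\lim_{\delta \to 0} \limsup_p \sup_{0 \leq S \leq S' \leq S+\delta \leq T} \P\bigl(|X_p(S') - X_p(S)| > \eps\bigr) = 0,
\]
where $S, S'$ range over stopping times of the natural filtration of the joint walk.

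Condition (a) is immediate: tightness of a random vector in $\R^3$ is equivalent to marginal tightness in $\R$, which is implied by the assumed tightness of each $X^i_p$ in $D(\R)$. For condition (b), I would use the bound $|X_p(S')-X_p(S)| \leq \sum_{i=1}^3 |X^i_p(S')-X^i_p(S)|$, so that it suffices to control each component separately. Here I would invoke the random walk structure: the strong Markov property for the joint walk $X_p$ (with respect to its own natural filtration) yields $X_p(S') - X_p(S) \stackrel{d}{=} X_p(S'-S)$, and this identity restricts coordinatewise. Consequently,
\[
\P\bigl(|X^i_p(S')-X^i_p(S)| > \eps/3\bigr) \leq \sup_{0 \leq u \leq \delta} \P\bigl(|X^i_p(u)| > \eps/3\bigr).
\]

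To close the argument, I would show that the last supremum tends to $0$ as $\delta \to 0$, uniformly in $p$. This is a direct consequence of marginal tightness of $X^i_p$ in $D(\R)$ together with $X^i_p(0)=0$: indeed, the Skorohod modulus characterization of compact sets in $D(\R)$ implies that $\sup_{u \leq \delta} |X^i_p(u)|$ tends to $0$ in probability, uniformly in $p$, which dominates the pointwise supremum of probabilities.

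The main subtlety, and the step to execute carefully, is that Aldous' criterion requires stopping times adapted to the \emph{joint} filtration, whereas marginal tightness naturally provides information coordinatewise. This is reconciled by the fact that the strong Markov property applies to $X_p$ as a random walk in $\R^3$ with respect to its joint natural filtration, and then transfers trivially to any coordinate by the linear projection; stationarity of increments of $X^i_p$ then makes the reduction to deterministic-time increments valid.
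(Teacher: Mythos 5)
Your argument is essentially correct but takes a genuinely different route from the paper. The paper's proof is a three-line chain of equivalences: for a random walk, tightness in $D$ is equivalent to tightness of the one-time marginal at $t=1$ (this is where the random-walk structure enters, via \cite[Corollary VII.3.6]{Jacod03:0}), and for random vectors in $\R^3$ joint tightness is trivially equivalent to coordinatewise tightness; chaining these reduces the whole lemma to a statement about random variables in $\R^3$. You instead verify Aldous' criterion by hand, which is more self-contained but forces you to re-derive the small-time oscillation control that the cited criterion packages away; both arguments use the independent-increments structure at the same essential point, namely to reduce increments after a stopping time to increments started from $0$. One step of yours needs tightening: the identity $X_p(S')-X_p(S)\stackrel{d}{=}X_p(S'-S)$ and the displayed bound by $\sup_{0\le u\le\delta}\P(|X^i_p(u)|>\eps/3)$ are not justified as written, because $S'-S$ is random (a stopping time for the post-$S$ walk), and a supremum of pointwise probabilities does not control the value at a random time --- take $S'$ to be the first time the increment exceeds $\eps/3$. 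The correct reduction, via the strong Markov property at $S$, is $\P\bigl(|X^i_p(S')-X^i_p(S)|>\eps/3\bigr)\le\P\bigl(\sup_{0\le u\le\delta}|X^i_p(u)|>\eps/3\bigr)$. Fortunately the quantity you prove to be uniformly small in your final step is exactly this running supremum (via the modulus $w'$ and $X^i_p(0)=0$), so the argument closes once you route the estimate through it rather than through the pointwise supremum.
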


\begin{proof}
	The proof follows by the equivalence of the following statements:
	\begin{enumerate}
		\item $(X_p^1)$, $(X_p^2)$ and $(X_p^3)$ are each tight in $D(\R)$;
		\item $(X_p^1(1))$, $(X_p^2(1))$ and $(X_p^3(1))$ are each tight in $\R$;
		\item $(X_p^1(1), X_p^2(1), X_p^3(1))$ is tight in $\R^3$;
		\item $(X_p^1, X_p^2, X_p^3)$ is tight in $D(\R^3)$.
	\end{enumerate}
The implications (1) $\Rightarrow$ (2), (2) $\Rightarrow$ (3) and (3) $\Rightarrow$ (4) are immediate, while the implication (4) $\Rightarrow$ (1) holds true because $(X^1_p, X^2_p, X^3_p)$ is a random walk, see for instance~\cite[Corollary VII.3.6]{Jacod03:0}.
\end{proof}

Since $(T_p, \cZ_p) \Rightarrow (T_\infty, \cZ_\infty)$ by Corollary~\ref{lemma:S-T-Z} and $(R_p, \cZ_p) \Rightarrow (a \cZ_\infty, \cZ_\infty)$ by Corollary~\ref{lemma:R-Z}, it follows that $(T_p, \cZ_p, R_p) \Rightarrow (T_\infty, \cZ_\infty, a \cZ_\infty)$, the convergence taking place in $D(\R^3)$ by Lemma~\ref{lemma:sk}. We now show that the convergence of this trivariate renewal process implies that $(\H_p, \underline S_p) \fdd \ (a \underline S_\infty, \underline S_\infty)$.


By the Skorohod embedding theorem, we can assume without loss of generality that $(R_p, \cZ_p, T_p)$ converges to $(a\cZ_\infty, \cZ_\infty, T_\infty)$ almost surely. Fix in the rest of the proof some $t \geq 0$. For $p \in \N \cup \{\infty\}$ let $\overline S_p(t) = \sup_{[0,t]} S_p - S_p(t)$ be the process $S_p$ reflected above its past supremum and $L_p$ its local time process at $0$, which is the right-continuous inverse of $T_p$:
\[ L_p(t) = \inf \left\{ s \geq 0: T_p(s) > t \right\}. \]

First, note that because $S_\infty$ is spectrally positive, is not a subordinator or a pure drift and has unbounded variation, $0$ is regular for $(-\infty, 0)$ according to~\cite[Theorem VII.1]{Bertoin96:0} and also for $(0,\infty)$ according to~\cite[Corollary VII.5]{Bertoin96:0}.

Since $S_\infty$ is spectrally positive, we have $\P(\overline S_\infty(t) = 0) = \P(\inf_{[0,t]} S_\infty = 0)$ by~\cite[Proposition VI.3]{Bertoin96:0}. We deduce that $\overline S_\infty(t) > 0$ almost surely. Since $\Delta T_\infty(L_\infty(t))$ corresponds to the size of the excursion of $\overline S_\infty$ away from $0$ straddling $t$~\cite[Proposition IV.5]{Bertoin96:0}, an immediate consequence of $\overline S_\infty(t) > 0$ is that $T_\infty$ jumps at time $L_\infty(t)$.

Also, this implies that $T_p \circ L_p(t) \to T_\infty \circ L_\infty(t)$. Indeed, for $p \in \N \cup {\infty}$ we have
\[ T_p(L_p(t)) = \inf\{s > t: \overline S_p(s) = 0\}. \]
(For $p \in \N$ this comes from the various definitions, and for $p = \infty$ see for instance~\cite[Proposition IV.7]{Bertoin96:0}.)
A a consequence, $T_p(L_p(t))-t$ is the hitting time of $0$ by $\overline S_p(t \, + \, \cdot)$. Because the reflection does not kick in before $\overline S_\infty$ hits $0$, conditionally on $\overline S_\infty(t) = x$ (which is almost surely $>0$ by the previous remark) this is the same in distribution as the entrance time in $[0,\infty)$ of $S_\infty$ started in $-x$. Since $0$ is regular for $(0,\infty)$, this entrance time is continuous with respect to the Skorohod topology at $S_\infty$, see for instance~\cite[Proposition VII.2.11]{Jacod03:0} or~\cite[Lemma 3.1]{Lambert15:0}. We thus obtain $T_p(L_p(t)) \to T_\infty(L_\infty(t))$. Since $t - T_p(L_p(t)-)$ is the hitting time of $S_p$ looked backward in time, the same argument shows by reversing time that $T_p(L_p(t)-) \to T_\infty(L_\infty(t)-)$ and so $\Delta T_p(L_p(t)) \to \Delta T_\infty(L_\infty(t))$.

Since $\Delta T_\infty(L_\infty(t)) \neq 0$, $T_p \to T_\infty$ in $D(\R)$ and $L_p(t) \to L_\infty(t)$, the convergence $\Delta T_p(L_p(t)) \to \Delta T_\infty (L_\infty(t))$ means that the jump of $T_\infty$ at $L_\infty(t)$ is carried in the pre-limit at time $L_p(t)${, see for instance~\cite[Proposition VI.2.1]{Jacod03:0}}. Since $(R_p, \cZ_p, T_p) \to (a \cZ_\infty, \cZ_\infty, T_\infty)$ in $D(\R^3)$, this implies that
\begin{equation} \label{eq:conv-R-Z}
	(R_p, \cZ_p) \big( L_p(t)- \big) \to (a \cZ_\infty, \cZ_\infty) \big( L_\infty(t)- \big)
\end{equation}
by standard properties of the Skorohod topology, see for instance~\cite[Proposition VI.2.1]{Jacod03:0}.

By Theorem \ref{thm:recap-2}, there exists some sequence $\rho_p \to \infty$ such that for each $t \geq 0$ we have
\[ (\H_p(t), \underline S_p(t)) = (R_{\rho_p}, \cZ_{\rho_p}) \big( L_{\rho_p}(t) - \big), \ \ \mbox{in law}. \]
In view of this equality in distribution, the convergence~\eqref{eq:conv-R-Z} therefore implies that $(\H_p(t), \underline S_p(t)) \Rightarrow (a \cZ_\infty, \cZ_\infty) (L_\infty(t)-)$. Since on the other hand $\underline S_p \Rightarrow \underline S_\infty$ by Corollary~\ref{lemma:S-T-Z}, we see that $\underline S_\infty(t)$ and $\cZ_\infty(L_\infty(t)-)$ are equal in distribution, the previous convergence achieves the proof of Theorem~\ref{thm:main}.





\begin{thebibliography}{10}

\bibitem{Aldous93:0}
David Aldous, \emph{The continuum random tree. {III}}, Ann. Probab. \textbf{21}
  (1993), no.~1, 248--289. \MR{MR1207226 (94c:60015)}

\bibitem{Bennies00:0}
J\"urgen Bennies and G\"otz Kersting, \emph{A random walk approach to
  {G}alton-{W}atson trees}, J. Theoret. Probab. \textbf{13} (2000), no.~3,
  777--803. \MR{1785529}

\bibitem{Bertoin96:0}
Jean Bertoin, \emph{L\'evy processes}, Cambridge Tracts in Mathematics, vol.
  121, Cambridge University Press, Cambridge, 1996. \MR{MR1406564 (98e:60117)}

\bibitem{Bingham89:0}
N.~H. Bingham, C.~M. Goldie, and J.~L. Teugels, \emph{Regular variation},
  Encyclopedia of Mathematics and its Applications, vol.~27, Cambridge
  University Press, Cambridge, 1989. \MR{1015093 (90i:26003)}

\bibitem{Haan78:0}
L.~de~Haan and S.~I. Resnick, \emph{Derivatives of regularly varying functions
  in {${\bf R}^{d}$} and domains of attraction of stable distributions},
  Stochastic Process. Appl. \textbf{8} (1978/79), no.~3, 349--355. \MR{535309}

\bibitem{Duquesne03:0}
Thomas Duquesne, \emph{A limit theorem for the contour process of conditioned
  {G}alton-{W}atson trees}, Ann. Probab. \textbf{31} (2003), no.~2, 996--1027.
  \MR{MR1964956 (2004a:60076)}

\bibitem{Duquesne02:0}
Thomas Duquesne and Jean-Fran{\c{c}}ois Le~Gall, \emph{Random trees, {L}\'evy
  processes and spatial branching processes}, Ast\'erisque (2002), no.~281,
  vi+147. \MR{MR1954248 (2003m:60239)}

\bibitem{Gnedenko68:0}
B.~V. Gnedenko and A.~N. Kolmogorov, \emph{Limit distributions for sums of
  independent random variables}, Translated from the Russian, annotated, and
  revised by K. L. Chung. With appendices by J. L. Doob and P. L. Hsu. Revised
  edition, Addison-Wesley Publishing Co., Reading, Mass.-London-Don Mills.,
  Ont., 1968. \MR{0233400 (38 \#1722)}

\bibitem{Jacod03:0}
Jean Jacod and Albert~N. Shiryaev, \emph{Limit theorems for stochastic
  processes}, second ed., Grundlehren der Mathematischen Wissenschaften
  [Fundamental Principles of Mathematical Sciences], vol. 288, Springer-Verlag,
  Berlin, 2003. \MR{MR1943877 (2003j:60001)}

\bibitem{Janson05:0}
Svante Janson and Jean-Fran{\c{c}}ois Marckert, \emph{Convergence of discrete
  snakes}, J. Theoret. Probab. \textbf{18} (2005), no.~3, 615--647. \MR{2167644
  (2006g:60126)}

\bibitem{Kallenberg02:0}
Olav Kallenberg, \emph{Foundations of modern probability}, second ed.,
  Probability and its Applications (New York), Springer-Verlag, New York, 2002.
  \MR{MR1876169 (2002m:60002)}

\bibitem{Lambert10:0}
Amaury Lambert, \emph{The contour of splitting trees is a {L}\'evy process},
  Ann. Probab. \textbf{38} (2010), no.~1, 348--395.

\bibitem{Lambert15:0}
Amaury Lambert and Florian Simatos, \emph{Asymptotic {B}ehavior of {L}ocal
  {T}imes of {C}ompound {P}oisson {P}rocesses with {D}rift in the {I}nfinite
  {V}ariance {C}ase}, J. Theoret. Probab. \textbf{28} (2015), no.~1, 41--91.
  \MR{3320960}

\bibitem{Lyons95:0}
Russell Lyons, Robin Pemantle, and Yuval Peres, \emph{Conceptual proofs of
  {$L\log L$} criteria for mean behavior of branching processes}, Ann. Probab.
  \textbf{23} (1995), no.~3, 1125--1138. \MR{1349164 (96m:60194)}

\bibitem{Lyons96:0}
\bysame, \emph{Biased random walks on {G}alton-{W}atson trees}, Probab. Theory
  Related Fields \textbf{106} (1996), no.~2, 249--264. \MR{1410689 (97h:60094)}

\bibitem{Marckert03:0}
Jean-Fran\c{c}ois Marckert and Abdelkader Mokkadem, \emph{The depth first
  processes of {G}alton-{W}atson trees converge to the same {B}rownian
  excursion}, Ann. Probab. \textbf{31} (2003), no.~3, 1655--1678. \MR{1989446}

\bibitem{Marckert03:1}
\bysame, \emph{States spaces of the snake and its tour---convergence of the
  discrete snake}, J. Theoret. Probab. \textbf{16} (2003), no.~4, 1015--1046
  (2004). \MR{2033196}

\bibitem{Schertzer18:0}
Emmanuel Schertzer and Florian Simatos, \emph{{Height and contour processes of
  Crump-Mode-Jagers forests (I): general distribution and scaling limits in the
  case of short edges}}, Electron. J. Probab. \textbf{23} (2018), 1--43.

\bibitem{Schertzer19:0}
\bysame, \emph{{Height and contour processes of Crump-Mode-Jagers forests (II):
  the Bellman--Harris universality class}}, Electron. J. Probab. \textbf{24}
  (2019), 1--38.

\bibitem{Whitt02:0}
Ward Whitt, \emph{Stochastic-process limits}, Springer Series in Operations
  Research, Springer-Verlag, New York, 2002, An introduction to
  stochastic-process limits and their application to queues. \MR{MR1876437
  (2003f:60005)}

\end{thebibliography}


\providecommand{\bysame}{\leavevmode\hbox to3em{\hrulefill}\thinspace}
\providecommand{\MR}{\relax\ifhmode\unskip\space\fi MR }
\providecommand{\MRhref}[2]{%
  \href{http://www.ams.org/mathscinet-getitem?mr=#1}{#2}
}
\providecommand{\href}[2]{#2}




\end{document}